\newtheorem{example}{Example}
\newtheorem{definition}{Definition}
\theoremstyle{plain}
\newtheorem{remark}{Remark}
\newtheorem{lemma}{Lemma}
\newtheorem{theorem}{Theorem}
\def\ve{\varepsilon}
\def\d{\partial}
\def\R{\mathbb{R}}
\def\E{\mathbb{E}}
\def\tu{\tau}
\newcommand{\aA}{{\mathbb A}}
\begin{document}

\title{Strong solutions and asymptotic behavior of bidomain equations with random noise.}
\author[2]{Oleksiy Kapustyan \thanks{kapustyanav@gmail.com}}
\author[1]{Oleksandr Misiats\thanks{omisiats@vcu.edu}}
\author[2]{Oleksandr Stanzhytskyi \thanks{ostanzh@gmail.com}}

\affil[1]{Department of Mathematics, Virginia Commonwealth University,
Richmond, VA, 23284, USA}
\affil[2]{Department of Mathematics,
Taras Shevchenko National University of Kyiv, Ukraine}

\date{\today}
\maketitle

\begin{abstract}
In this paper we study the conditions for the existence of strong solutions (both local and global) for stochastic bidomain equations. To this end, we use apriori energy estimates and Serrin-type theorems. We further address the asymptotic behavior of the solutions, which includes the analysis of small stochastic perturbations and large deviations. In a separate section we specify the support of the invariant measure,  whose existence was established in \cite{Mi}.
\end{abstract}



\maketitle

\section{Introduction.} In this paper we study the existence of strong solutions and the asymptotic behavior of nonlocal reaction diffusion-type system of equations, called the {\it bidomain equation}. The bidomain model was first proposed by Tung \cite{Tun}. It is now the generally accepted model of electrical behavior of cardiac tissue \cite{Hen}. The heart muscle consists of so-called {\it excitable cells}, which react to electrical stimulus by depolarizing. In other words, the potential difference across the cell membrane changes. This depolarization motivates us to consider the tissue consisting of two separate domains: the intracellular and the extracellular. The corresponding electric potentials are denoted with $u_i$ and $u_e$, and the difference between these potentials is the transmembrane potential, denoted with $u:=u_i - u_e$. Since the current, which passes from one domain to the other, must pass through the cell membrane, the assumptions on the conductive properties of the cell membrane, combined with the conservation of current principle, yield the system of equations. It consists of a pair of partial differential equations of reaction-diffusion type, coupled with an ordinary differential equation:
\begin{equation}\label{bidomain}
\begin{cases}
\frac{\d u}{\d t} + f(u,w) + A_i u_i = I_i \text{ in } (0, +\infty) \times D,\\
\frac{\d u}{\d t} + f(u,w) - A_e u_e = -I_e \text{ in } (0, +\infty) \times D,\\
\frac{\d w}{\d t} + g(u,w) = 0, \text{ in } (0, +\infty) \times D,\\
\sigma_i \nabla u_i \cdot n = \sigma_e \nabla u_e \cdot n = 0,  \text{ in } (0, +\infty) \times \d D,\\
u(0):=u_i(0) - u_e(0) = u_0, w(0) = w_0 \text{ in } D.
\end{cases}
\end{equation}

Here $D \subset \R^3$ is a smooth bounded set representing myocardium, $A_{i,e}u:=- \nabla \cdot (\sigma_{i,e} \nabla u)$, $\sigma_{i,e}$ are uniformly elliptic conductivity matrices, $f, g: \R \times \R \to \R$ are the functions representing transmembrane ionic currents, and $s_{i,e}$ are the external current sources.

There is a number of models for the nonlinearities $f$ and $g$, which describe the propagation of impulses. The most relevant are the {\it FitzHugh - Nagumo model} \cite{Fit}
\begin{equation}\label{FN}
\begin{cases}
f(u,w) = \eta [u(u-a)(u-1) + w], \\
g(u,w) = bw - cu,
\end{cases}
\end{equation}
the {\it Aliev-Panfilov model} \cite{AliPan}
\begin{equation}\label{AP}
\begin{cases}
f(u,w) = \eta [k u(u-a)(u-1) + wu],  \\
g(u,w) = ku(u-1-a)+w,
\end{cases}
\end{equation}
and the {\it Rogers-McCulloch model} \cite{RogMcc}
\begin{equation}\label{RM}
\begin{cases}
f(u,w) = \eta [b u(u-a)(u-1) + wu],  \\
g(u,w) = -(cu-dw).
\end{cases}
\end{equation}
In these models the coefficients satisfy $0<a<1$ and  $\eta, b,c,d,k,\ve > 0$. Another important example of nonlinear interaction is {\it Allen-Cahn model}
\begin{equation}\label{AC}
\begin{cases}
f(u) = \eta(u^3 - u),\\
g \equiv 0,
\end{cases}
\end{equation}
in which the system \eqref{bidomain} decouples. Following the procedure described, e.g. in \cite{Yves} and references therein, the system \eqref{bidomain} may be written in the form
\begin{equation}\label{operatorBidomain}
\begin{cases}
  \frac{\d u}{\d t} +f(u,w) + \aA u  = I,\\
  \frac{\d y}{\d t} + g(u,w) = 0,\\
  u(0) = u_0, w(0)  = w_0,
  \end{cases}
\end{equation}
where $\aA:= A_i(A_i+A_e)^{-1}A_e$ is the {\it bidomain operator} and $I:=I_i-A_i(A_i+A_e)^{-1}(I_i+I_e)$ is the modified source term. The primary difficulty arising in the analysis of the system \eqref{operatorBidomain} is that the operator $\aA$ is nonlocal. In particulrar, it fails to satisfy the maximum principle, and the comparison theorems do not apply.  Nevertheless, it was shown in \cite{Giga} that this operator generates an analytic semigroup in both $L^p$ and $L^{\infty}$. The paper \cite{Yves} showed that the equation \eqref{operatorBidomain} is locally well-posed in the strong sense and globally well-posed in the weak sense for the nonlinearities $f$ and $g$ described above. The recent result \cite{Mattias1} established the existence of strong periodic solutions.

The equation \eqref{operatorBidomain} is an important model of cardiac defibrillation \cite{KeeSne}, \cite{KunRun}. In this model, $I$ is the extracellular stimulation current in the form $I = \sum_{k=0}^{N} u_k(t) \chi_{\Omega_k}(x)$, where $\Omega_k(x)$ are the characteristic functions of electrodes, and $u_k$ are the corresponding pulses.  During defibrillation various regions of heart tissue are in different, usually random, phases of electrical activity (excited, refractory, partially recovered etc), and the purpose of defibrillation is to give an electric impulse that stimulates the entire heart and returns it to its normal (e.g. stationary) state. Therefore, understanding long time behavior of problems of type \eqref{operatorBidomain} is crucial in order to understand whether such recovery can take place, and how quickly it happens.

In this article we consider the bidomain equations driven by white noise:
\begin{equation}\label{BDES2}
\begin{cases}
  du = [- \aA u - f(u,w) + I] dt + dW, \ t \geq 0,\\
  w_t =  -g(u,w),\\
  u(0) = u_0, w(0)  = w_0,
\end{cases}
\end{equation}
where is an infinite-dimensional  $Q$-Weiner process, defined below. 

To the best of our knowledge, the  results on stochastic bidomain equation of type \eqref{BDES2} are relatively recent, and include \cite{MatStr}, \cite{Kar} and \cite{Mi}. In \cite{Kar} the authors established the existence of both martingale and weak solutions under the monotonicity condition on nonlinearity, which holds, in particular, if we pick FitzHugh-Nagumo model. In \cite{Mi} we established the existence and uniqueness of weak solutions without monotonicity condition. Its worth mentioning that this result is applicable to Rogers-McCullogh and Aliev-Panfilov models, which do not satisfy the monotonicity condition. The existence of stationary solution and the corresponding invariant measures was also obtained in \cite{Mi}. Finally, in \cite{MatStr} the authors obtained the existence of global strong solution for \eqref{BDES2} with FitzHugh-Nagumo nonlinearity.

In the present paper we address the following questions:
\begin{itemize}
\item In Section \ref{sec3} we establish the existence of a local strong solution for a large class of nonlinearities, containing not only FitzHugh-Nagumo and Allen-Cahn, but also Rogers-McCulloch and Aliev-Panfilov nonlinearities;
\item Under slightly more restrictive assumptions on the noise, in Section \ref{sec4} we establish the existence of a global strong solution for the same class of nonlinearities in dimension 2;
\item Finally, in Sections \ref{sec5} and \ref{sec6} we study the asymptotic behavior of the solutions, such as small stochastic perturbations, large deviations principle, and specify the support of the invariant measure.
\end{itemize}

\section{Preliminaries.}\label{sec2}

Throughout the paper, $D \subset \R^3$ is a smooth bounded set, $H = L^2(D)$, 
\begin{equation}\label{defH0}
H_0:=\{v \in H, \int_{D} v dx = 0\},
\end{equation} 
and $V = H^1(D) = W^{1}_2(D)$. The pairing $(u,v)$ stands for the dot product in $H$, and the pairing $\langle u, v \rangle$ stands for the dot product in $V$.

We assume the uniform ellipticity assumption on the conductivities $\sigma_i$ and $\sigma_e$, i.e. there are $\sigma_2>\sigma_1>0$ such that for all $x \in D$ and $\xi \in \R^3$ we have
\begin{equation}\label{elliptic}
\sigma_1|\xi|^2 \leq \langle\sigma_{i,e}\xi, \xi\rangle \leq \sigma_2|\xi|^2
\end{equation}
It was shown in \cite{Mattias1} and  \cite{Yves} that the bidomain operator $\aA$ is well defined on
\begin{equation}\label{domain}
D(\aA):=\{u \in H^2(D)\cap H_0, \nabla u \cdot \nu= 0 \text{ on } \partial D\}.
\end{equation}
Furthermore, following \cite{Yves}, one can define the biliniear form $a(u,v)$ for all $u,v \in V:=H^1(D)$, such that
\[
a(u,v) = (\aA u, v)
\]
if $u$ and $v$ are in $D(\aA)$. This biliniar form implicitly defines the bidomain operator in weak sense. Furthermore, under the assumption \eqref{elliptic} it is shown, e.g. in \cite{Yves}, that the bilinear form $a$ is symmetric, continuous and coercive on $V$, i.e. there are constants $\alpha>0$ and $M>0$ such that
\begin{equation}\label{coercivity1}
\forall u \in V, \ \alpha \|u\|_{V} \leq a(u,u) + \alpha \|u\|_H^2
\end{equation}
and
\begin{equation}\label{coercivity2}
\forall u,v \in V, \ |a(u,v)| \leq M \|u\|_{V} \|v\|_{V}.
\end{equation}
Furthermore, there are $0 < \lambda_0 \leq \lambda_1 \leq  ...\leq \lambda_i \leq ...$ in $\R$ and orthonormal (in $H$) basis of $\{\psi_i, i \geq 1\} \in V$ s.t.
\[
\forall v \in V, a(\psi_i, v) = \lambda_i (\psi_i,v).
\]
It follows, e.g., from Theorem 13, \cite{{Yves}}, that $\psi_i \in D(\aA)$ for all $i \geq 1$, therefore $\aA \psi_i = \lambda_i \psi_i$ in $H$.
We may now define the noise as
\[
W(t,x) := \sum_{i=1}^{\infty} \sqrt{\gamma_i} \psi_i(x) W_i(t),
\]
where $W_i$ are independent standard Wiener processes, and
\begin{equation}\label{def_gamma}
\gamma:= \sum_{i=1}^{\infty} \gamma_i < \infty.
\end{equation}
We next describe the class of the nonlinearities $f$ and $g$. Following \cite{Yves}, we assume that the nonlinearities $f(u,w)$ and $g(u,w)$ are of the form
\begin{equation}\label{forms}
f(u,w) = f_1(u)+f_2(u) w, \ g(u,w) = g_1(u)+g_2 w.
\end{equation}
Here $g_2 \in \R$, and $f_1, f_2$ and $g_1$ are continuous real functions, satisfying the following conditions:
\begin{itemize}
\item{{\bf [C1]}} there exist constants $c_i \geq 0 \ (i=1...6)$ such that for any $u \in \R$
\begin{eqnarray}\label{C1}
|f_1(u)| \leq c_1 + c_2 |u|^{3};\\
\nonumber |f_2(u)| \leq c_3 + c_4 |u|;\\
\nonumber |g_1(u)| \leq c_5+c_6 |u|^{2}.
\end{eqnarray}
\item{{\bf[C2]}} $f_1(u)$, $f_2(u)$ and $g_1(u)$ are locally Lipschitz in $u$;
\item{{\bf [C3]}} there exist constants $a, b$ and $c \geq 0$ such that for any $(u,w) \in \R^2$ we have
\begin{equation}\label{C2}
u f(u,w) + w g(u,w) \geq a u^4 - b(u^2 + w^2) - c.
\end{equation}

\end{itemize}


Let $(\Omega, \mathcal{F}, P)$ be a complete probability space, and
$\mathcal{F}_{t}$ be a right-continuous filtration such that $W(t,x)$ is adapted to $\mathcal{F}_t$, and $W(t)-W(s)$ is independent of $\mathcal{F}_s$ for all  $s<t$.

\begin{definition}\label{defWeek0}
An $\mathcal{F}_t$-adapted random process $(u(t,\cdot), w(t,\cdot)) \in V \times H$ is called a weak solution of \eqref{BDES2}, if  for a.e. $t > 0$, for every $v \in V$ and $y \in H$ we have
\begin{equation}\label{StBD1}
(u(t),v) = (u(0), v) - \int_{0}^{t}[a(u(\tau),v) + (f(u(\tau),w(\tau)), v) - ( I(\tau), v )]d\tau + (v, W(t))
\end{equation}
and
\begin{equation}\label{StBD2}
(w(t),y) = (w(0), y) - \int_{0}^{t}(g(u(\tau),w(\tau)),y)\, d\tau.
\end{equation}

\end{definition}
\begin{definition}\label{defStrong0}
An $\mathcal{F}_t$-adapted random process $(u(t,\cdot), w(t,\cdot)) \in D(\aA) \times H$ is called a strong solution of \eqref{BDES2}, if  for a.e. $t > 0$
\begin{equation}\label{StBD3}
u(t) = u(0) - \int_{0}^{t}[\aA u(\tau) + f(u(\tau),w(\tau)) - I(\tau)]d\tau + W(t),
\end{equation}
and
\begin{equation}\label{StBD4}
w(t)= w(0) - \int_{0}^{t} g(u(\tau),w(\tau))\, d\tau.
\end{equation}
\end{definition}
Let $S(t)$ be the semigroup generated by the operator $-\aA$, i.e.
\[
S(t) u_0(x) := u(t,x),
\]
where $u(t,x)$ solves
\[
\begin{cases}
u_t = - \aA u,\\
u(0,x) = u_0(x).
\end{cases}
\]
It follows from \cite{Giga} that $S(t)$ is a analytic semigroup in $L^p(D)$ for $p > 1$.  Note that by definition of $\lambda_k$ and $\psi_k$ we have
   \begin{equation}\label{eigenfunctions}
   S(t) \psi_k = e^{-\lambda_k t} \psi_k \text{ for } k \geq 1.
   \end{equation}
The semigroup $S(t)$ enables us to define the stochastic convolution
\[
W_A(t,x):= \int_{0}^{t} S(t-\tau) d W(\tau) = \sum_{i=1}^{\infty} \gamma_i \int_{0}^{t} S(t-\tau) \psi_i(x) d W_i(\tu).
\]
Due to Sobolev embedding, $V = H^1(D) \subset L^p(D)$ in $\R^3$ if $2 \leq p \leq 6$ \cite{Bre}.\\

We conclude this section with the following Lemma regarding the stochastic convolution, which was established in \cite{Mi}:

\begin{lemma}\label{lem:1}
  Assume
  \begin{equation}\label{coloring_condition}
  \sum_{k=1}^{\infty} \gamma_k \lambda_k^{1/2}< \infty.
  \end{equation}
  Then\\
  \begin{itemize}
  \item[{[i]}]
   For $T \geq 0$ and almost all $\omega
  \in \Omega$ we have
  \begin{equation}\label{important}
    \sup_{t \in [0,T]} \|W_A\|_{L^p(D)}^p \leq C(T,\omega) < \infty.
    \end{equation}
  \item[{[ii]}]
  For all $t \in [0,T]$ and almost all $\omega
  \in \Omega$ we have $W_A(t,\cdot) \in D(\aA)$.
  \end{itemize}
\end{lemma}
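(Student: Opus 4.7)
The plan is to work with the eigenbasis expansion
\[W_A(t,x) = \sum_{i=1}^\infty \sqrt{\gamma_i}\,\psi_i(x)\,Z_i(t),\qquad Z_i(t) := \int_0^t e^{-\lambda_i(t-\tau)}\,dW_i(\tau),\]
which, since the $\psi_i$ diagonalise $\aA$ and the $W_i$ are independent, reduces both claims to estimates on a series of independent scalar Ornstein--Uhlenbeck processes with variances $\mathrm{Var}(Z_i(t)) = (1-e^{-2\lambda_i t})/(2\lambda_i)$.

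For part [i], the Sobolev embedding $V = H^1(D) \hookrightarrow L^p(D)$ for $2 \leq p \leq 6$ in $\R^3$ reduces the goal to establishing $W_A \in C([0,T];V)$ almost surely. To promote the easy $L^2(\Omega)$ bound to pathwise supremum control I would invoke the factorization method of Da Prato--Zabczyk: for $\alpha \in (0,1/2)$, write
\[W_A(t) = \frac{\sin\pi\alpha}{\pi}\int_0^t (t-s)^{\alpha-1} S(t-s)\,Y_\alpha(s)\,ds,\qquad Y_\alpha(s) := \int_0^s (s-\tau)^{-\alpha} S(s-\tau)\,dW(\tau).\]
Itô's isometry, combined with the spectral expansion of $S$ and the assumption \eqref{coloring_condition}, yields $Y_\alpha \in L^{2q}(\Omega\times[0,T];V)$ for $\alpha$ sufficiently small and $q$ sufficiently large; boundedness of the fractional convolution against $(t-s)^{\alpha-1}S(t-s)$ from $L^{2q}(0,T;V)$ into $C([0,T];V)$ then gives the required pathwise continuity, from which \eqref{important} follows by Sobolev embedding.

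For part [ii], the spectral identity $\aA \psi_i = \lambda_i \psi_i$ suggests formally $\aA W_A(t) = \sum_i \sqrt{\gamma_i}\,\lambda_i\,Z_i(t)\,\psi_i$. Since the $Z_i(t)$ are independent centered Gaussians, convergence of this series in $H$ reduces to almost sure summability of the numerical series $\sum_i \gamma_i \lambda_i^2 Z_i(t)^2$, whose terms have explicit variances controlled by the smoothing decay $(1-e^{-2\lambda_i t})/(2\lambda_i)$. A Borel--Cantelli argument applied to the tail of this sum, combined with the spectral decay encoded in \eqref{coloring_condition}, yields the desired a.s.\ finiteness; since each $\psi_i$ already lies in $D(\aA)$ with the built-in Neumann condition (Theorem 13 of \cite{Yves}), membership in $D(\aA)$ passes to the $H$-limit of the partial sums.

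The main obstacle is the calibration of the factorization exponent $\alpha$ in part [i]: one needs $\alpha$ small enough that the mixed integral $\int_0^T \sum_i \gamma_i (s-\tau)^{-2\alpha} e^{-2\lambda_i(s-\tau)}\,d\tau$ is absorbed by $\sum_k \gamma_k \lambda_k^{1/2}$, yet large enough that the fractional convolution with $(t-s)^{\alpha-1}$ embeds $L^{2q}(0,T;V)$ continuously into $C([0,T];V)$ for some finite $q$. Matching these two constraints is precisely what the coloring condition \eqref{coloring_condition} is designed to permit, and it is the technical heart of the argument.
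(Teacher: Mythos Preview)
The paper does not supply its own proof of this lemma: it is stated at the end of Section~\ref{sec2} with the sentence ``which was established in \cite{Mi}'' and is simply quoted from that earlier work. There is therefore no in-paper argument to compare your proposal against.

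That said, your outline is broadly the standard one. For part~[i], going through $C([0,T];V)$ via the Da~Prato--Zabczyk factorization and then invoking the Sobolev embedding $V\hookrightarrow L^p(D)$ is exactly the right machinery, and your remark about the calibration of $\alpha$ against the coloring condition is on point: the integral $\int_0^s r^{-2\alpha}e^{-2\lambda_i r}\,dr$ scales like $\lambda_i^{2\alpha-1}$, so $\sum_i\gamma_i\lambda_i^{2\alpha}<\infty$ is exactly what \eqref{coloring_condition} guarantees for $\alpha\le 1/4$, leaving room for the convolution step.

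For part~[ii], your reduction to $\sum_i \gamma_i\lambda_i^2 Z_i(t)^2<\infty$ a.s.\ is correct, but be careful with the bookkeeping: since $\mathbb{E}Z_i(t)^2\le (2\lambda_i)^{-1}$, the expected value of the series is $\tfrac12\sum_i\gamma_i\lambda_i$, which is \emph{not} directly controlled by \eqref{coloring_condition} as stated (that hypothesis only gives $\sum_i\gamma_i\lambda_i^{1/2}<\infty$). For a sum of independent non-negative terms, a.s.\ convergence via Borel--Cantelli or the three-series theorem essentially forces summability of the truncated means, so you cannot expect to get away with a strictly weaker moment condition here. Either the noise normalization in \cite{Mi} differs from the $\sqrt{\gamma_i}$ convention used in the present paper (note the paper itself writes $\gamma_i$, not $\sqrt{\gamma_i}$, in the display defining $W_A$, which would change the exponent), or an additional hypothesis is implicitly in force. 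You should check the conventions in \cite{Mi} before committing to this step.
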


\section{Existence of local strong solutions.}\label{sec3}

In this section, we obtain the existence of a local strong solution for \eqref{BDES2} in the most general case, when the nonlinearities $f$ and $g$ satisfy the conditions {\bf[C1]}-{\bf[C3]}. As follows from Lemma \ref{lem:1}, $W_A(t,x) \in D(\aA)$ provided  \eqref{coloring_condition} holds.  Thus by Lemma 5.13 \cite{DapZab92} we have $W_A(t) = \int_{0}^{t} \aA W_A(s) ds + W(t)$. We proceed with the change of variables $U = u - W_A(t)$. This way, the pair $(U,y)$ solves
\begin{equation}\label{Z}
\begin{cases}
dU = (-\aA U - f(U+ W_A(t),y) + I(t))dt;\\
dw=-g(U+W_A(t),w)dt.
\end{cases}
\end{equation}
If $(U,w)$ is a strong solution of \eqref{Z}, then $U \in D(\aA)$, which, in turn, implies that $u = U + W_A \in D(\aA)$, and hence $(u,w)$ is a strong solution of \eqref{BDES2}. Since $S_A(t)$ is an analytic semigroup, the Theorem 5.14 \cite{DapZab96} for $\alpha \in (0,1/2)$, the process $W_A(\cdot)$ has $\alpha$-Holder continuous trajectories in $H = L^2(D)$. Following \cite{Yves}, introduce
\[
Z:=H \times B, \ B = L^{\infty}(D)
\]
and $\mathcal{A}: D(\mathcal{A}) \subset Z \to Z$, $\mathcal{A} z= (-\aA u,0)$ for $z = (u,w) \in Z$. Here $D(\mathcal{A}) = D(\aA) \times B$, where
\[
D(\aA) =\{u \in H: \sum_{i=1}^{\infty} \lambda_i^2 (u,\Psi_i)^2 < \infty \}.
\]
Next, define
\[
Z^{\alpha} =\{u \in H, \sum_{i \geq 0} \lambda_i^{2 \alpha} (u, \Psi_i)^2 < \infty\} \times B
\]
and
\[
\mathcal{A}^\alpha (u,y):= \left(\sum_{i \geq 0} \lambda_i^{\alpha} (u,\Psi_i) \Psi_i, 0\right).
\]
It follows from \cite{Hen}, that for $0 \leq \alpha \leq \beta$ we have an embedding $Z^{\beta} \subset Z^{\alpha}$, and if $\beta = 1$, then $D(\mathcal{A}) \subset Z^{\beta} \subset Z^{\alpha}$. Hence
\begin{equation}\label{inclusion}
W_A \in D((-\aA)^{\alpha}).
\end{equation}
If $\frac{d}{4} < \alpha < 1$, then $Z^{\alpha} \subset B \times B$. Then by Lemma 16 \cite{Yves} we have
\[
\mathcal{F}(z) := (u,w)  \to (f(u,w),g(u,w))
\]
is a locally Lipschits map from $Z^{\alpha}$ to $Z$, provided $f$ and $g$ are locally Lipschits functions. \\
The main result of this section is the following Theorem:
\begin{theorem}
  Assume $\{\gamma_k, k\geq 1\}$ satisfy \eqref{coloring_condition}. Then for any initial conditions $(u_0,w_0) \in Z^\alpha$ there exists $T = T(\omega)>0$ such that there exists $(u(t,x,\omega), w(t,x,\omega)),$ which is a strong solution of \eqref{BDES2} on the interval $[0,T(\omega))$ in the sense of Definition \ref{defStrong0}.
\end{theorem}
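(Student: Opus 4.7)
The plan is to carry out the reduction already sketched in the paragraphs preceding the theorem statement. Introduce the shifted unknown $U := u - W_A$ and study the pathwise random PDE \eqref{Z} for $(U, w)$. By Lemma \ref{lem:1}[ii], $W_A(t, \cdot) \in D(\aA)$ almost surely under \eqref{coloring_condition}, so any strong solution $(U, w)$ of \eqref{Z} lifts to a strong solution of \eqref{BDES2} via $u := U + W_A$, because $D(\aA)$-regularity is preserved by the shift and the operator term $\aA W_A$ is absorbed into $W(t)$.

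I recast \eqref{Z} as an abstract semilinear Cauchy problem on $Z = H \times B$ of the form
\begin{equation}
\frac{d}{dt}\binom{U}{w} + \mathcal{A}\binom{U}{w} = -\tilde{\mathcal{F}}(t, U, w),
\end{equation}
with $\tilde{\mathcal{F}}(t, U, w) := \bigl( f(U + W_A(t), w) - I(t),\ g(U + W_A(t), w) \bigr)$. Here $-\mathcal{A}$ generates an analytic semigroup on $Z$ by \cite{Giga}, with fractional scale $\{Z^\beta\}$ as described in the preliminaries. I choose $\alpha \in (3/4, 1)$ so that, since $d = 3$, the embedding $Z^\alpha \subset B \times B$ of the paper applies. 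Condition \textbf{[C2]} combined with Lemma 16 of \cite{Yves} then yields that $\mathcal{F}: Z^\alpha \to Z$ is locally Lipschitz. Together with the pathwise membership $W_A(\cdot, \omega) \in C([0,T]; Z^\alpha)$ supplied by \eqref{inclusion} (which piggybacks on Lemma \ref{lem:1}[ii] and the analyticity of $\mathcal{A}$), this shows that $\tilde{\mathcal{F}}(t, \cdot): Z^\alpha \to Z$ is locally Lipschitz with Lipschitz constant locally bounded in $t$.

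At this point I am squarely in the setting of the classical local existence theorem for semilinear parabolic equations with a time-dependent, locally Lipschitz nonlinearity on a fractional power space (Theorem 3.3.3 of \cite{Hen}). Applied pathwise with initial datum $(u_0, w_0) \in Z^\alpha$, it delivers an a.s.\ positive random existence time $T(\omega) > 0$ and a unique mild solution $(U, w)(\cdot, \omega) \in C([0, T(\omega)); Z^\alpha)$ which, by analyticity of $\mathcal{A}$ and the regularization $U(t, \omega) \in D(\aA)$ for $t > 0$, satisfies \eqref{Z} in the strong sense. Setting $u := U + W_A$ produces the desired strong solution of \eqref{BDES2}; adaptedness of $(u, w)$ to $\mathcal{F}_t$ is inherited from the $\mathcal{F}_t$-adapted Picard iterates used in the construction of $U$.

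The main obstacle I anticipate is the second step: checking that $t \mapsto W_A(t, \omega)$ enters the nonlinearity with enough regularity to feed a deterministic local existence machine, in particular continuity into the $L^\infty$-factor of $Z^\alpha$. This does follow from the analyticity of the semigroup together with Lemma \ref{lem:1}[ii] and the embedding $D(\aA) \subset B$, but the argument must be set up carefully so that the local Lipschitz bounds for $\tilde{\mathcal{F}}(t,\cdot)$ are uniform on compact $t$-intervals. Every other ingredient --- the analytic semigroup $S(t)$, the fractional scale $Z^\alpha$, and the local Lipschitz estimate for $\mathcal{F}$ itself --- has already been assembled in the preliminaries and the paragraphs leading up to the theorem.
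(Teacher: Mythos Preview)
Your overall strategy coincides with the paper's: shift by $W_A$, treat \eqref{Z} as a deterministic semilinear problem on the fractional scale $Z^\alpha$, and invoke a local existence theorem (the paper uses Theorem~20 of \cite{Yves}, you cite Henry). The local Lipschitz property of $\mathcal{F}$ via Lemma~16 of \cite{Yves} is used identically.

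The gap is precisely where you flag it. Both Henry's Theorem~3.3.3 and Theorem~20 of \cite{Yves} require the time-dependent forcing $t\mapsto\tilde{\mathcal{F}}(t,U,w)$ to be H\"older continuous in $t$ (into $Z$), which reduces to H\"older continuity of $t\mapsto W_A(t)$ in a norm at least as strong as $Z^\alpha$. Your justification --- ``this does follow from the analyticity of the semigroup together with Lemma~\ref{lem:1}[ii]'' --- does not deliver this: Lemma~\ref{lem:1}[ii] gives only pointwise membership $W_A(t)\in D(\aA)$ for each fixed $t$, with no control on $\|W_A(t)-W_A(t_1)\|_{D(\aA)}$, and analyticity of $S(t)$ by itself says nothing about time-regularity of the stochastic convolution in the graph norm. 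This is not a formality; it is the entire technical content of the paper's proof. The paper establishes it by estimating $\E\|\aA W_A(t)-\aA W_A(t_1)\|_H^2$ directly: splitting the stochastic integral over $[0,t_1]$ and $[t_1,t]$, bounding the first piece via the identity $e^{-\lambda_k(t-s)}-e^{-\lambda_k(t_1-s)}=\int_{t_1-s}^{t-s}(-\lambda_k)e^{-\lambda_k\tau}\,d\tau$ together with a fractional-power trick to extract $(t-t_1)^{2\gamma}$, and then invoking a Kolmogorov-type criterion (Proposition~3.15 of \cite{DapZab92}) to obtain pathwise H\"older continuity of $\aA W_A$ in $H$. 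Note this step uses the stronger summability $\sum_k\gamma_k^2\lambda_k^2<\infty$, not just \eqref{coloring_condition}. Without this computation your argument does not close.
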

\begin{proof}
Using the local Lipschitz condition, we get
\begin{multline}
\|f(U + W_A(t), w) - f(U_1+W_A(t_1), w_1)\|_{H} \\
\leq L(\|U-U_1\|_{Z^{\alpha}}+\|w-w_1\|_{Z^{\alpha}} + L_1\|W_A(t) - W_A(t_1)\|_{D(-\aA)}),
\end{multline}
where $L_1$ is the constant in the embedding $D(\mathcal{A}) \subset Z$. But
\[
\|W_A(t) - W_A(t_1)\|_{D(-\aA)} = \|W_A(t) - W_A(t_1)\|_{H} + \|\aA (W_A(t) - W_A(t_1))\|_{H}.
\]
Thus it follows from Theorem 5.14 \cite{DapZab92} that $W_A(t)$ is $\alpha$ - Holder, with $\alpha \in (0,1/2)$, i.e.
\[
\|W_A(t) - W_A(t_1)\|_{H} \leq C(\omega) |t-t_1|^{\alpha}
\]
with probability 1. Furthermore, since
\[
\aA W_A = \sum_{k=1}^{\infty} \gamma_k \int_{0}^{t} (-\lambda_k) e^{-\lambda_k(t-s)} \psi_k(x) d W_k(s),
\]
we have
\begin{align*}
& \E \|\aA (W_A(t) - W_A(t_1))\|_{H}^2 = \\
& \E \left\| \sum_{k=1}^{\infty} \gamma_k (-\lambda_k) \left(\int_{0}^{t} (-\lambda_k) e^{-\lambda_k(t-s)} \psi_k(x) d W_k(s) - \int_{0}^{t_1} (-\lambda_k) e^{-\lambda_k(t_1-s)} \psi_k(x) d W_k(s)\right)\right\|_H^2 \leq \\
& \E  \left\| \sum_{k=1}^{\infty} \int_{0}^{t_1} (-\lambda_k)\gamma_k (e^{-\lambda_k(t-s)} - e^{-\lambda_k(t_1-s)})\psi_k(x) d W_k(s) + \int_{t_1}^{t} \gamma_k (-\lambda_k) \psi_k e^{-\lambda_k(t-s)} dW_k(s)\right\|_H^2 \leq \\
&\leq 2 \left(\E \sum_{k=1}^{\infty} \gamma_k \lambda_k  \int_{0}^{t_1} (e^{-\lambda_k(t-s)} - e^{\lambda_k(t_1-s)}) dW_k(s) \right)^2 \\
& + 2 \E  \left(\sum_{k=1}^{\infty} \gamma_k \lambda_k  \int_{t_1}^{t} e^{-\lambda_k(t-s)} dW_k\right)^2 := 2J_1 + 2 J_2.
\end{align*}
Using the properties of the stochastic integrals (\cite{DapZab92}, pp.132-133), we have the following estimate:
\[
J_1 \leq  \sum_{k=1}^{\infty} \gamma_k^2 \lambda_k^2 \int_{0}^{t_1} \left(e^{-\lambda_k(t-s)} - e^{-\lambda_k(t_1-s)}\right)^2 ds = \sum_{k=1}^{\infty} \gamma_k^2 \lambda_k^2 \int_{0}^{t_1} \left(\int_{t_1-s}^{t-s} - \lambda_k e^{-\lambda_k \tau} d\tau\right)^2
\]
\begin{equation}\label{I1}
=  \sum_{k=1}^{\infty} \gamma_k^2 \lambda_k^2 \int_{0}^{t_1} \left(\int_{t_1-s}^{t-s} - \lambda_k \tau e^{-\lambda_k \tau} \frac{1}{\tau} d\tau\right)^2 ds \leq C_0 \sum_{k=1}^{\infty} \gamma_k^2 \lambda_k^2 \int_{0}^{t_1} \left(\int_{t_1-s}^{t-s} \frac{d\tau}{\tau} \right)^2 ds.
\end{equation}
Now, for fixed $\gamma \in (0,1/2)$, we estimate the last term in \eqref{I1} as
\begin{equation}\label{estI1}
\int_{t_1-s}^{t-s} \frac{d\tau}{\tau}  = \int_{t_1-s}^{t-s} \frac{\tau^{\gamma-1} d\tau}{\tau^{\gamma}} \leq  \int_{t_1-s}^{t-s} \frac{\tau^{\gamma-1} d\tau}{(t_1 - s)^\gamma} = \frac{(t-s)^{\gamma} - (t_1-s)^{\gamma}}{\gamma (t_1 - s)^\gamma} \leq C \frac{(t-t_1)^{\gamma}}{(t_1 - s)^\gamma}.
\end{equation}
Combining \eqref{I1} and \eqref{estI1}, we get
\[
J_1 \leq C \sum_{k=1}^{\infty} \gamma_k^2 \lambda_k^2 (t-t_1)^2.
\]
Next, estimating $J_2$ as
\[
J_2 \leq \sum_{k=1}^{\infty} \gamma_k^2 \lambda_k^2 \int_{t_1}^{t} e^{-2 \lambda_k(t-s)} ds \leq C(t-t_1),
\]
altogether we have
\[
\E \|\aA W_A(t)-\aA W_A(t_1) \|^2_H \leq C(t-t_1)^{2\gamma}.
\]
By Proposition 3.15 \cite{DapZab92},
\[
\|\aA W_A(t)- \aA W_A(t_1)\| \leq C(\omega)(t-t_1)^{\alpha}, \, \alpha \in \left(0,\frac{1}{4\gamma}\right).
\]
Hence, by Theorem 20 \cite{Yves}, for any $(u_0,w_0) \in Z^{\alpha}$ there is $T(\omega)$ such that the system \eqref{BDES2} has a strong solution.
\end{proof}

\section{Existence of global strong solution}\label{sec4}

We will next use the regularity theory for critical spaces (Serrin-type results) for parabolic semilinear equations. 
We will need  the following notation:
\begin{itemize}
\item For $\beta \in [0,1]$ the interpolation space $X_\beta = (X_0,X_1)_{\beta} := D(\aA^{\beta}) \times H$, where $X_0 = H \times H$ and $X_1 = D(\aA) \times H$.
\item Time-weighed spaces: if $Y$ is a Banach space, $p > 1$, $a \geq 0$ and $1 \geq \mu > 1/p$, then
\[
u \in L^{p,\mu}((0,a), Y) \text{ if } t^{1-\mu} u \in L^{p}((0,a), Y)
\]
and 
\[
u \in W^1_{p,\mu}((0,a), Y) \text{ if } t^{1-\mu} u \in W^{1,p}((0,a), Y).
\]
\item For $p \geq 2$, define the space $X_{\mu}:=W^{2/p}_{2}(D) \times H$, and 
$E_{\mu}(a) := L^{\infty}((0,a),H) \cap L^2((0,a),V) \times L^p((0,a), H).$ The Sobolev inequality (see, e.g. \cite{Bre}) yields $E_{\mu}(a) \subset L^p([0,a],X_{\mu})$ if $D \subset \R^2$. 
\end{itemize}
Let $A$ be a sectorial operator $A: X_1 \to X_0$, and $F: X_\beta \to X_0$, where $\beta \in (0,1)$ is chosen such that $X_{\beta} \subset V \times H.$ Consider the following evolution equation:
\begin{equation}\label{evolution_equation}
\begin{cases}
v_t + A v = F(v+\varphi(t,x)),\\
v(0) = v_0 \in X_{\beta}.
\end{cases}
\end{equation}

Assume
\begin{itemize}
    \item[{[i]}] For every $a \geq 0$, $\varphi \in L^{2p,\mu}([0,a), X_{\beta})$.
    \item[{[ii]}] There exists $C>0$ such that
\begin{equation}\label{bound}
\|F(\cdot)\|_{X_0} \leq C (1+\|\cdot\|_{X_\beta}^3).
\end{equation}
\end{itemize}
Then the following Theorem holds:
\begin{theorem}\label{Pru}
Assume the conditions [i] and [ii] hold, and the solution  of \eqref{evolution_equation} is defined on the maximal interval of existence $[0,T)$. Then $u \in L^p((0,a), X_{\mu})$ for each $0 \leq a<T$, and if $T < \infty$, then $u \notin L^{p}([0,T), X_{\mu})$. 
\end{theorem}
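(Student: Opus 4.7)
The plan is to derive this as a consequence of the abstract maximal regularity theory of Pr\"uss and collaborators for semilinear parabolic equations in time-weighted $L^p$-spaces, which is the standard setting for Serrin-type blow-up criteria. Since $A$ is sectorial from $X_1$ to $X_0$, it admits maximal $L^{p,\mu}$-regularity on every finite interval $(0,a)$ with the time weight $t^{1-\mu}$, $\mu\in(1/p,1]$. Concretely, for data $(v_0,h)\in X_\mu\times L^{p,\mu}((0,a),X_0)$ the linear inhomogeneous problem $v_t+Av=h$, $v(0)=v_0$ has a unique solution in the maximal regularity class
\[
\mathbb{F}_\mu(a):=W^1_{p,\mu}((0,a),X_0)\cap L^{p,\mu}((0,a),X_1),
\]
and there are continuous embeddings $\mathbb{F}_\mu(a)\hookrightarrow C([0,a],X_\mu)\cap L^p((0,a),X_\mu)$, with constants controlled by the data.

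First I would establish local existence of a unique solution $u\in\mathbb{F}_\mu(a)$ for some small $a>0$ via a contraction argument in a ball of $\mathbb{F}_\mu(a)$. The cubic growth bound [ii], combined with the embedding of $\mathbb{F}_\mu(a)$ into $L^{3p,\mu}((0,a),X_\beta)$ obtained by mixed-derivative / Gagliardo--Nirenberg interpolation between $L^{p,\mu}(X_1)$ and $L^\infty(X_\mu)$ (for a suitably large $p$), and the hypothesis $\varphi\in L^{2p,\mu}((0,a),X_\beta)$ from [i], produces an estimate of $\|F(v+\varphi)\|_{L^{p,\mu}(X_0)}$ by $\|v\|_{\mathbb{F}_\mu(a)}$ whose contraction constant tends to $0$ as $a\to 0^+$. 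Iterating up to the maximal time $T$ gives $u\in\mathbb{F}_\mu(a)$ for every $a<T$, whence the claimed regularity $u\in L^p((0,a),X_\mu)$ follows from the embedding above.

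The main obstacle, and the heart of the theorem, is the blow-up alternative. I would argue by contraposition: suppose $T<\infty$ and yet $u\in L^p([0,T),X_\mu)$; then I must show that the solution can be continued strictly beyond $T$. The key point is that $X_\mu$ is designed to be the \emph{critical trace space} for the cubic nonlinearity; by [ii] together with interpolation one controls $\|F(u+\varphi)(t)\|_{X_0}$ by terms of the form $\|u(t)\|_{X_\mu}^{r}\|u(t)\|_{X_1}^{1-r}$, plus lower order contributions involving $\varphi$, with exponents arranged exactly so that the $L^{p,\mu}(X_0)$ norm of $F(u+\varphi)$ over a short terminal subinterval $[T-\varepsilon,T)$ can be made arbitrarily small when the $L^p(X_\mu)$ mass of $u$ on that interval is small, a fact guaranteed by absolute continuity of the integral under the standing assumption $u\in L^p([0,T),X_\mu)$. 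Substituting this into the maximal regularity estimate produces a uniform bound on $u$ in $\mathbb{F}_\mu(T)$, so that the trace $u(T)\in X_\mu$ exists, and restarting the local existence construction from $u(T)$ extends $u$ past $T$, contradicting the maximality of $[0,T)$. The delicate point is choosing the interpolation exponents compatible with the scaling invariance built into $X_\mu$ so that this terminal-interval smallness really does close the bootstrap; everything else is standard abstract maximal-regularity machinery.
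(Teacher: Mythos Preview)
Your proposal is correct and follows the same Pr\"uss--Wilke maximal regularity framework that the paper invokes (they cite \cite{Pru}, Corollary 2.3 explicitly). The overall architecture---contraposition, interpolation of $X_\beta$ between the trace space $X_\mu$ and a maximal-regularity-type space, then a self-bounding inequality that forces extension past $T$---is the same.

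There are two implementation differences worth noting. First, you work in the standard abstract maximal regularity class $\mathbb{F}_\mu(a)=W^1_{p,\mu}(X_0)\cap L^{p,\mu}(X_1)$, whereas the paper uses the energy space $E_\mu(a)=L^\infty((0,a),H)\cap L^2((0,a),V)\times L^p((0,a),H)$ and the embedding $E_\mu(a)\subset L^p((0,a),X_\mu)$ valid in dimension~$2$; this makes the paper's argument slightly more PDE-specific. Second, the paper closes the bootstrap by choosing parameters so that the interpolation exponent satisfies $3\alpha\le 1$ and then reading off a direct bound $\|v\|_{E_\mu(0,a)}\le M(\|v(0)\|_{X_\mu}+C+C\|v\|_{L^p(X_\mu)}^{3(1-\alpha)}\|v\|_{E_\mu}^{3\alpha})$, while you use the terminal-interval smallness argument. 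Your version is a bit more robust at the critical endpoint $3\alpha=1$, where the paper's inequality does not close by Young's inequality alone and implicitly also needs smallness of $\|v\|_{L^p(X_\mu)}$ on a short interval; so the two closing mechanisms are really the same idea viewed from slightly different angles.
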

\begin{proof}
The proof follows the idea from \cite{Pru}, Corollary 2.3. For 
\begin{equation}\label{cond}
\mu - \frac{1}{p} > 2 \beta -1,
\end{equation} 
let 
\[
\alpha:= \frac{\beta - (\mu-1/p)}{1-(\mu-1/p)}.
\]
Assume $T<\infty$. Using the condition [ii], for any $a<T$ we have
\begin{equation}\label{bound1}
\|F(v+\varphi)\|_{L^{p,\mu}((0,a), X_{\beta})} \leq C_1(1+ \|v + \varphi\|^3_{L^{2p,\tau}((0,a),X_{\beta})}) \leq 4C_1 \|v\|^3_{L^{2p,\tau}((0,a),X_{\beta})} + C_2,
\end{equation}
where
\[
C_2:=C_1 + 4C_1 \|\varphi\|^3_{L^{2p,\tau}((0,T),X_{\beta})} < \infty.
\]
Next, by interpolation inequality we have
\begin{equation}\label{bound2}
\|v\|^3_{L^{2p,\tau}((0,a),X_{\beta})} \leq C_3 \|v\|_{L^p((0,a), X_{\mu})}^{3(1-\alpha)} \|v\|_{E_{\mu}(0,a)}^{3\alpha}.
\end{equation}
Finally, let $M$ be the constant of maximal regularity for the interval $[0,T)$. Combining \eqref{bound1} and \eqref{bound2}, we have
\[
\|v\|_{E_{\mu}(0,a)} \leq M(\|v(0)\|_{X_{\mu}} + C_2 + C_4  \|v\|_{L^p((0,a), X_{\mu})}^{3(1-\alpha)} \|v\|_{E_{\mu}(0,a)}^{3\alpha}).
\]
For the subcritical choice of parameters $\mu$, $\beta$ and $p$, given by \eqref{cond} such that $3 \alpha \leq 1$, we have $\|v\|_{E_\mu(0,a)}$ is bounded uniformly in $a$ for all $a < T$. This means that the solution can be continued beyond $T$, which is a contradiction with the fact that $[0,T]$ is the interval of maximal existence.

\end{proof}

The main result of this section is the following Theorem:
\begin{theorem}
  Assume 
  \[
  \sum_{k=1}^{\infty} \gamma_k^2 \lambda_k^2 < \infty,
  \]
and the equation \eqref{BDES2}
is considered in dimension 2. Then for any initial conditions $(u_0,w_0) \in X_{\beta}$ there exists a strong solution $(u(t,x,\omega), w(t,x,\omega))$ of \eqref{BDES2} in the sense of Definition \ref{defStrong0}, which is defined for all $t \geq 0$.
\end{theorem}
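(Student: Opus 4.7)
The plan is to combine the local existence from Section \ref{sec3} with the Serrin-type continuation principle of Theorem \ref{Pru} and an a priori energy identity driven by condition [C3]. As in the previous section, I perform the substitution $U = u - W_A(t)$, turning \eqref{BDES2} into \eqref{Z}, which fits the abstract framework \eqref{evolution_equation} with $A = \aA$, $\varphi(t) = (W_A(t), 0)$ and $F(u,w) = (-f(u,w)+I(t), -g(u,w))$. Section \ref{sec3} provides a local strong solution $(U,w)$ on a maximal random interval $[0, T(\omega))$, so it is enough to prove $T(\omega) = +\infty$ almost surely.

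The first step is to verify the two hypotheses of Theorem \ref{Pru}. The growth bound [ii] follows from [C1] together with the two-dimensional Sobolev embedding $V \subset L^q(D)$ for every finite $q$: choosing $\beta \in (0,1)$ so that $X_\beta \subset V \times H$, one reads off $\|F(u,w)\|_{X_0} \leq C(1 + \|(u,w)\|_{X_\beta}^3)$ from the cubic control on $f_1$ and the bilinear control on $f_2 w$ and $g_1$. Hypothesis [i], namely $W_A \in L^{2p,\mu}([0,a), X_\beta)$, is where the strengthened coloring condition $\sum_k \gamma_k^2 \lambda_k^2 < \infty$ enters: it implies \eqref{coloring_condition} so Section \ref{sec3} still applies, and the computation bounding $\E \|\aA W_A(t)\|_H^2$ from that section together with the factorization method of \cite{DapZab92} produces the required space-time integrability almost surely.

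The heart of the argument and the main obstacle is an almost sure uniform a priori bound on $(U,w)$ in the space $E_\mu(a)$. I apply the It\^o formula to $\|u(t)\|_H^2 + \|w(t)\|_H^2$ in the original variables in \eqref{BDES2}, obtaining
\begin{equation}
d\bigl(\|u\|_H^2 + \|w\|_H^2\bigr) + 2 a(u,u)\,dt + 2\bigl[(f(u,w),u) + (g(u,w),w)\bigr]\,dt = 2(I,u)\,dt + \gamma\,dt + 2(u, dW),
\end{equation}
and then use coercivity \eqref{coercivity1} together with condition [C3] to absorb the interaction term, which produces
\begin{equation}
\sup_{t \in [0, T \wedge a_0)} \bigl(\|u(t)\|_H^2 + \|w(t)\|_H^2\bigr) + \int_0^{T \wedge a_0} \|u(s)\|_V^2\, ds \leq C(\omega, a_0)
\end{equation}
for every $a_0 > 0$, after a stopping-time plus Burkholder--Davis--Gundy reduction of the martingale term and a Gronwall step on the $b(\|u\|_H^2 + \|w\|_H^2)$ contribution. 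Subtracting $W_A$, which itself lies in the same space by hypothesis [i], the same bound persists for $(U,w)$.

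Finally, the 2D Sobolev inclusion $E_\mu(a) \subset L^p([0,a], X_\mu)$ recorded at the start of Section \ref{sec4} upgrades the energy estimate into a uniform bound on $\|(U,w)\|_{L^p([0,a], X_\mu)}$ for $a < T \wedge a_0$, and the contrapositive of Theorem \ref{Pru} then forces $T(\omega) \geq a_0$; since $a_0$ is arbitrary, $T(\omega) = +\infty$ a.s. Setting $u = U + W_A$ with $W_A(t) \in D(\aA)$ from Lemma \ref{lem:1}[ii] yields the claimed global strong solution in the sense of Definition \ref{defStrong0}. The delicate point throughout is the rigorous application of It\^o's formula in the $H$-norm to a strong solution with $D(\aA)$-valued trajectories and noise of finite trace $\gamma$, together with the bootstrapping needed to pass from the $E_\mu$ bound in dimension two to the $L^p(X_\mu)$ regularity Theorem \ref{Pru} demands.
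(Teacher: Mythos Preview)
Your proposal is correct and tracks the paper's argument closely: It\^o energy estimate using coercivity \eqref{coercivity1} and {\bf [C3]}, reduction to the deterministic framework \eqref{evolution_equation}, verification of hypotheses [i]--[ii] of Theorem~\ref{Pru}, and the two-dimensional embedding $E_\mu(a)\subset L^p([0,a],X_\mu)$ to contradict maximality of $T$. The one point of departure is the process you subtract: you take $U=u-W_A$, whereas the paper subtracts the Wiener process itself, setting $z=u-W$, and it is for this choice that the condition $\sum_k\gamma_k^2\lambda_k^2<\infty$ enters explicitly, since it is exactly equivalent to $W(t)\in D(\aA)$ and hence to $\varphi=(-W,0)$ having the $X_\beta$ regularity required by hypothesis~[i]. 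With your choice of $W_A$, Lemma~\ref{lem:1} already furnishes $D(\aA)$-regularity under the weaker hypothesis \eqref{coloring_condition}, so your attribution of the strengthened coloring condition to the space--time integrability of $W_A$ is somewhat off target (though the $J_1$--$J_2$ estimates of Section~\ref{sec3} do implicitly use $\sum_k\gamma_k^2\lambda_k^2<\infty$ for the H\"older continuity of $\aA W_A$). The paper also establishes $w\in L^p([0,T],H)$ separately via the variation-of-constants formula for the $w$-equation together with the $L^4$ bound \eqref{L4estimate}, whereas you simply read it off from $w\in L^\infty([0,T],H)$ on a bounded interval; either route is adequate.
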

\begin{proof}
Let $t \in [0,T)$ be the maximal interval of existence of $(u(t,x,\omega), w(t,x,\omega))$. We argue by contradiction, and assume $T< \infty$. Using generalized Ito's formula \cite{Kry}, we have the following energy bounds:
\begin{align*}
\begin{cases}
 \|u(t)\|^2 = \|u_0\|^2 +&2 \int_{0}^{t} \langle -\aA u, u \rangle d \tau - 2 \int_{0}^{t} (f,u) d\tau + 2 \int_{0}^{t} (I,u) d \tau \\
 & + \gamma t + 2 \int_{0}^{t}(u(\tau), dW(\tau)),\\
\|w(t)\|^2 = \|w_0\|^2 -&2 \int_{0}^{t}(g,w) d \tau.
\end{cases}
\end{align*}
 It follows from \eqref{coercivity1} that
\begin{equation}\label{estA}
 \langle -\aA u, u \rangle \leq - \alpha \|u\|_V^2 + \alpha \|u\|_H^2.
\end{equation}
Combining \eqref{estA} and the condition \eqref{C2}, we get

\begin{multline}\label{energybound}
 \|u(t)\|^2 + \|w(t)\|^2 \leq  \|u_0\|^2_H + \|w_0\|_H^2+  \\
 + 2 \int_{0}^t\left[-\alpha \|u(s)\|_V^2 +\alpha \|u(s)\|_H^2 - 2 a \int_D u^4(x,s) dx + b (\|u(s)\|_H^2 + \|w(s)\|_H^2) + c|D|\right] ds \\
 +  2 \int_{t_0}^{t}(I,u) d \tau + \gamma t + 2 \int_{0}^{t}(u(\tau), dW(\tau)).
\end{multline}

Using Lemma 7.2 \cite{DapZab92}, we have
\[
\left(\E \sup_{s \in [0,t]} \left| \int_0^s (u(\sigma), d W(\sigma)) \right|\right)^2 \leq \E \left(\sup_{s \in [0,t]} \left|\int_0^s (u(\sigma), d W(\sigma))\right|\right)^2 =  \E \sup_{s \in [0,t]} \left|\int_0^s (u(\sigma), d W(\sigma)) \right|^2 \leq
\]
\[
\leq C \int_{0}^t \E \|u(s)\|^2_H ds \leq C \int_{0}^t \E \sup_{\sigma \in [0,s]}(\|u(\sigma)\|^2_H + \|w(\sigma)\|^2_H) ds.
\]
Define
\[
\psi(s):= \E \sup_{\sigma \in [0,s]} (\|u(\sigma)\|_H^2 + \|w(\sigma)\|_H^2),
\]
we have
\[
\E \sup_{s \in [0,t]} \left|\int_{0}^s u(\sigma) dW(\sigma)\right| \leq  C \sqrt{\int_0^t \varphi(s) ds} \leq c_1 + c_2 \int_0^t \varphi(s) ds.
\]
It follows from \eqref{energybound} that for $t \in [0,T]$
\[
\psi(t) + 2 \alpha \E  \int_0^t \|u\|_V^2 ds + 2a \E \int_0^t \int_D u^4 dx ds \leq c_1 + c_2 \int_0^t \psi(s) ds.
\]
Hence, by Gronwalls inequality,  
\[
\psi(t) \leq c_1 e^{c_2 t}, t \in [0,T],
\]
as well 
\begin{equation}\label{L4estimate}
\E \int_0^t \|u\|_V^2 ds + \E \int_0^t \int_D u^4 dx ds  < \infty.
\end{equation}
Consequently, for almost all $\omega \in \Omega$, 
\begin{equation}\label{2}
u \in L^{\infty}((0,T),H) \cap L^2((0,T),V)
\end{equation}
and
\[
w  \in L^{\infty}((0,T),H).
\]
Introduce $z:=u-W$. Then $z$ satisfies
\begin{equation}
\begin{cases}
  dz = [- \aA z - f(z+W,w) + I] dt, \ t \geq 0,\\
  dw =  -g(z+W,w) dt.
\end{cases}
\end{equation}
Since 
\[
W(t,x) = \sum_{k=1}^{\infty} \gamma_k \Psi_k(x) W_k(t),
\]
we have
\[
\aA W(t,x) = \sum_{k=1}^{\infty} \gamma_k \aA \Psi_k(x) W_k(t),
\]
hence
\[
W \in D(\aA) \iff \sum_{k=1}^{\infty} (\gamma_k \lambda_k)^2 < \infty.
\]
Thus $W$ belongs to $V$, and for a.e. $\omega \in \Omega$ we have
\[
\|W\|_{L^2((0,T),V)} < \infty.
\]
Furthermore, by Theorem 2.5 \cite{chow}, we have
\[
\E \sup_{0 \leq t \leq T} \|W(t)\|^2_{H} \leq 4 \E \|W(T)\|_H^2 < \infty,
\]
thus for a.e. $\omega \in \Omega$ we have 
\begin{equation}\label{3}
W \in L^{\infty}((0,T),H) \cap L^2((0,T),V). 
\end{equation}
Combining \eqref{2} and \eqref{3} we get 
that 
\[
z = u - W  \in L^{\infty}((0,T),H) \cap L^2((0,T),V).
\]
We claim that
\begin{equation}\label{w in L^p}
w \in L^p([0,T], H).
\end{equation}
To this end,  we represent the solution of 
\[
w_t = -g_2 w   - g_1(u)
\]
as
\[
w = \frac{1}{e^{g_2 t}} \left(- \int_0^t e^{g_2 s} g_1(u(s)) ds + w_0 \right).
\]
Hence,
\begin{align}
&  \int_0^t \|w(s)\|_{H}^p ds \leq C_1 + C_2 \int_0^t \left\| \int_0^s e^{g_2 \tau} g_1(u(\tau)) d \tau \right\|_{H}^p ds \leq \\
\nonumber &  \leq C_1 + C_2  \int_0^t \left( \int_0^t \| g_1(u(\tau))\| _{H} d \tau \right)^p ds = C_1 + C_2 t  \left( \int_0^t \| g_1(u(\tau))\| _{H} d \tau \right)^p = \\
\nonumber & = C_1 + C_2 t \left( \int_0^t \left(\int_{D} g_1^2 (u(\tau)) dx \right)^{1/2} d \tau \right)^p \leq  C_1 + C_2 t \left( \int_0^t \left(\int_{D} g_1^2 (u(\tau)) dx \right)^{1/2} d \tau \right)^p \leq \\
\nonumber &  \leq C_1 + C_2 t \left( \int_0^t \left(\int_{D} u^4(\tau)) dx \right)^{1/2} d \tau \right)^p < \infty
\end{align} 
a.s., where we used \eqref{L4estimate}. We are now in position to use the embedding for $d=2$:
\[
L^{\infty}((0,T),H) \cap L^2((0,T),V) \subset L^p((0,T),W^{2/p}_2(D)).
\]
This way for $v = (u,w)$ we have
\[
v \in L^p((0,T),W_2^{2/p}(D) \times L^q(D)) = L^p((0,T),X_{\mu})
\]
It remains to verify that $\varphi(t,x):= [-W(t,x),0]$ satisfies the condition [i]. Note that by Theorem 2.3, ch 6 \cite{chow}, we have
\[
\E \sup_{0\leq \tau \leq T}\|W(\tau,x)\|_{X_\beta}  \leq \left(\frac{2p}{2p-1}\right)^{2p}\|W(T,x)\|_{X_\beta} < \infty,
\]
since $W(t,x) \in D(\aA)$. Therefore for a.e. $\omega$
\[
W(t,x) \in L^{\infty}((0,T), X_{\beta}) \subset  L^{2p, \mu}((0,T), X_{\beta}),
\]
thus [i] holds. Hence, by Theorem \ref{Pru}, $T$ must be equal to $+\infty$, leading to contradiction. Therefore, we have global existence of the solution.
\end{proof}

\section{Support of the invariant measure}\label{sec5}

The long time behavior of solutions of stochastic evolution equations in some Hilbert space $H$ is a question of a separate interest. It is often addressed in terms of establishing existence and, in certain cases,  uniqueness of invariant measures, which, in turn, is the crucial item in establishing the ergodic behavior of the underlying physical systems. For the reader's convenience, we briefly remind the concept of invariant measure. Suppose $v_0 \in H$, and $B$ is a Borel subset of $H$. For $t \geq 0$ define the probability transition semigroup
\[
P_t(v_0,B):=\mathbb{P}(v(t,v_0) \in B).
\]
Furthemore, for any Borel measurable bounded function $\varphi \in M_b(H)$, define the Markov semigroup
\[
P_t \varphi(v_0) = \E \varphi(v(t,v_0) = \int_{H} \varphi(v) P_t(v_0,dv).
\]
\begin{definition}
Let the set $Pr(H)$ be the set of probability measures on $H$. An element $\mu \in Pr(H)$ is called an invariant measure for the Markov semigroup $P_t$ if
\[
\int_{H} \varphi(v_0) d \mu(v_0) = \int_{H}P_t \varphi(v_0) d \mu(v_0).
\]
\end{definition}

The works \cite{DapZab92, DapZab96} provide conditions for the existence and uniqueness of invariant measures in for general evolution equation of reaction-diffusion type. Their approach is based on the results of  Krylov and Bogoliubov \cite{KryBog} on the tightness of a family of measures. The key ingredients of this approach include the Feller property and stochastic continuity of the Markov semigroup $P_t$, and the existence of least one solution which is globally bounded in certain probability sense \cite{MisStaYip, MisStaYip2, MisStaYip3}. 
The existence of invariant measures for bidomain equation \eqref{BDES2} using the aforementioned procedure was established in \cite{Mi}. In particular, we showed that the equation \eqref{BDES2} has a stationary solution, which, in turn, implies the existence of invariant measure, defined on the functional space $(u,w)  \in H \times H$. The sufficient conditions for the uniqueness of this invariant measure were also derived in \cite{Mi}. 

A different approach to establishing the existence of invariant measures  and asymptotic behaviour of stochastic evolution equations is based on the coupling method (see e.g. \cite{Mue93}, \cite{Hairer} and references therein).  The study of the asymptotic behavior based on the exponential dichotomy of the differential operator was carried on in \cite{dichotomy}.

The main goal of this section is to show, that the invariant measure, established in \cite{Mi}, Theorem 5.1, is supported on the functional class $(u,w)$ with $u$ having regularity of at least $V$. Recall that for $v = (u,w)$ we denote 
\[
\|v\|_{\tilde{V}}^2: = \|u\|_{V}^2 + \|w\|^2_{H}
\]
and
\[
\|v\|_{\tilde{H}}^2: = \|u\|_{H}^2 + \|w\|^2_{H}.
\]
The main result of this section is the following theorem:

\begin{theorem}
Assume the conditions of Theorem 5.1 \cite{Mi} holds. 
Let $\mu \in Pr({\tilde{H}})$ be the invariant measure for \eqref{BDES2}. Then 
\[
\int_{\tilde{H}} \|v\|_{\tilde{V}}^2 d \mu(v) < \infty.
\]
\end{theorem}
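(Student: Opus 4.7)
The plan is to exploit the dissipative energy estimate from Section \ref{sec4} combined with the time-invariance of the law of the solution under the evolution. Let $(u(t),w(t))$ denote a stationary weak solution of \eqref{BDES2}, i.e., one whose law at every $t\geq 0$ coincides with $\mu$; such a solution is furnished by Theorem 5.1 of \cite{Mi}, and the a priori second-moment bound underlying the Krylov--Bogoliubov tightness argument there provides
\[
m:=\int_{\tilde{H}} \|v\|^2_{\tilde{H}}\, d\mu(v)<\infty.
\]

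I would then recycle the Ito computation already performed in Section \ref{sec4}. Applying the generalized Ito formula to $\|u(t)\|_H^2+\|w(t)\|_H^2$ and combining the coercivity \eqref{coercivity1} with the dissipativity condition \eqref{C2} yields
\[
d\|v(t)\|^2_{\tilde{H}} + 2\alpha\|u(t)\|_V^2\,dt \leq \bigl[C_1\|v(t)\|^2_{\tilde{H}} + C_2\bigr]\,dt + 2(u(t),dW(t))
\]
for constants $C_1,C_2$ depending only on the data. Localising with the stopping times $\tau_N:=\inf\{t\geq 0:\|v(t)\|_{\tilde{H}}\geq N\}$ kills the martingale after taking expectations, and monotone convergence as $N\to\infty$ (using a.s.\ global existence of the weak solution) produces, for every $t>0$,
\[
2\alpha\,\E\!\int_0^{t} \|u(s)\|_V^2\,ds \leq \E\|v(0)\|^2_{\tilde{H}} - \E\|v(t)\|^2_{\tilde{H}} + C_1\,\E\!\int_0^t \|v(s)\|^2_{\tilde{H}}\,ds + C_2 t.
\]

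Finally, I would invoke stationarity: since the law of $v(s)$ is $\mu$ for every $s$, one has $\E\|v(s)\|^2_{\tilde{H}}=m$ and $\E\|u(s)\|_V^2=\int_{\tilde{H}}\|u\|_V^2\,d\mu(v)$, the latter a priori possibly infinite. Plugging these identities into the inequality above and using Tonelli gives
\[
2\alpha t \int_{\tilde{H}} \|u\|_V^2\, d\mu(v) \leq m + (C_1 m + C_2)\, t.
\]
Dividing by $t$ and sending $t\to\infty$ yields $\int_{\tilde{H}}\|u\|_V^2\,d\mu(v)\leq (C_1 m+C_2)/(2\alpha)<\infty$, and combining with $\int_{\tilde{H}} \|w\|_H^2\,d\mu(v)\leq m$ delivers the claim.

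The main obstacle is the rigorous application of Ito's formula together with the stopping-time localisation at the level of a weak solution whose reaction nonlinearity is only locally Lipschitz with cubic growth in dimension three; this parallels the energy bookkeeping already carried out in Section \ref{sec4} and is essentially standard, but must be set up carefully to justify swapping limits and expectations. A secondary delicate point is the finiteness of $m$: rather than re-deriving it, I would quote it directly from the second-moment a priori estimate that drives the Krylov--Bogoliubov construction of $\mu$ in \cite{Mi}.
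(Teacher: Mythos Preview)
Your argument is correct, but it follows a different path than the paper's. The paper never assumes the finiteness of the second $\tilde H$-moment $m=\int_{\tilde H}\|v\|_{\tilde H}^2\,d\mu$; instead it works with the bounded truncations $f_R(v)=\|v\|_{\tilde V}^2\wedge R$, plugs the invariance identity $\int f_R\,d\mu=\int \tfrac{1}{T}\int_0^T P_t f_R\,dt\,d\mu(v_0)$ into the deterministic-initial-data estimate $\E\int_0^T\|v(t,v_0)\|_{\tilde V}^2\,dt\leq K_1\|v_0\|_{\tilde H}^2+K_2 T$, and then splits the $v_0$-integral over $B_{\tilde H}(\rho)$ and its complement. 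Choosing first $\rho$ large (so that $R\,\mu(\tilde H\setminus B_{\tilde H}(\rho))\leq 1$) and then $T$ large (so that $\rho^2/T\leq 1$) yields a bound on $\int f_R\,d\mu$ independent of $R$, and monotone convergence finishes. By contrast, you run It\^o's formula along a stationary solution, feed in $m<\infty$ from \cite{Mi}, and read off the $\tilde V$-moment directly from the time-averaged energy balance. Your route is shorter and more transparent once $m<\infty$ is in hand, but it relies on that extra input (and on realising $\mu$ as the law of a stationary process, which requires enlarging the probability space if $\mu$ is given abstractly). The paper's truncation-and-splitting argument is self-contained and applies verbatim to any invariant measure without knowing $m<\infty$ beforehand; in fact, running the paper's argument with $\|v\|_{\tilde H}^2\wedge R$ in place of $\|v\|_{\tilde V}^2\wedge R$ would furnish exactly the $m<\infty$ you need, so the two approaches dovetail.
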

In other words, any invariant measure is supported in $\tilde{V} = V \times H$.
\begin{proof}
The idea of the proof was inspired by the work \cite{Vikol}, in which the authors derived a similar result for the invariant measure emerging in stochastic primitive equations. As it was shown in \cite{Mi} (see proof of Theorem 5.1),  if $v = (u,w)$ is a solution of \eqref{BDES2}, then
\[
\int_{0}^{T} \|v\|_{\tilde{V}}^2 dt =  \int_{0}^{T} (\|u\|_{V}^2 + \|w\|^2_{H}) dt < K_1 \|v_0\|_{\tilde{H}}^2 + K_2 T
\]
for some $K_1$ and $K_2$ independent of $T$.  Denote $f_R(v): =\|v\|_{\tilde{V}}^2 \land  R$. By definition of invariant measure
\begin{equation}\label{inv1}
\int_{\tilde{H}} f(v_0) d \mu(v_0) = \int_{\tilde{H}} \int_{\tilde{H}} \frac{1}{T} \int_{0}^T P_t(v_0,dv) f(v) dt d\mu(v_0)
\end{equation}
for any $T>0$. Now, for any $\rho \geq 1$ and any $v_0 \in B_{\tilde{H}}(\rho)$, the ball of radius $\rho$ about the origin in ${\tilde{H}}$, we have 
\begin{equation}\label{inv2}
\left|\frac{1}{T} \int_{0}^{T} \int_{\tilde{H}} P_t(v_0,dv) f_R(v) dt d\mu(v) dt \right| = \left|\frac{1}{T} \int_{0}^{T} \E f_R(v(t,v_0)) dt \right| \leq K_2 + \frac{K_1 \|v_0\|_{\tilde{H}}^2}{T} \leq K_3(1 + \frac{\rho^2}{T}).
\end{equation}
Combining \eqref{inv1} and \eqref{inv2} we have

\begin{multline}\label{bnd}
 \int_{\tilde{H}} f_R(v_0)  d\mu(v_0) \leq \int_{B_{\tilde{H}}(\rho)} \frac{1}{T} \left|\int_{0}^{T}  \int_{\tilde{H}} P_t(v_0,dv) f_R(v) d \mu(v) dt \right| d\mu(v_0) +\\
 \int_{{\tilde{H}} \setminus B_{\tilde{H}}(\rho)}  \frac{1}{T} \left|\int_{0}^{T}  \int_{\tilde{H}} P_t(v_0,dv) f_R(v) d \mu(v) dt \right| d\mu(v_0)  \leq K_3(1 + \frac{\rho^2}{T}) + R\mu({\tilde{H}}\setminus B_{\tilde{H}}(\rho)). \end{multline}
First, for sufficiently large $\rho$, depending on
$R$, we have
\[
R\mu({\tilde{H}}\setminus B_{\tilde{H}}(\rho)) \leq 1.
\]
Then for such $\rho$ we choose $T$ large enough so that
\[
\frac{\rho^2}{T} \leq 1.
\]
Altogether, \eqref{bnd} yields an estimate, which is independent of $R$:

\begin{equation}\label{bndR}
\int_{\tilde{H}} f_R(v_0) d \mu(v_0) \leq 2 K_3 +1.
\end{equation}
Applying the Monotone Convergence Theorem to \eqref{bndR} we get
\[
\int_{\tilde{H}}\|v_0\|^2_{\tilde{V}} d \mu(v_0) < \infty,
\]
which completes the proof of the theorem.
\end{proof}

\section{Small Perturbations and Large Deviations}\label{sec6}

\subsection{Small perturbations}
In certain physical problems it is natural to assume that the random perturbations are small compared to the deterministic component of the dynamics (see \cite{VenFre} and references therein).  In this section, we consider such perturbations for bidomain equation, which leads to the following equation for any $\ve>0$: 
\begin{equation}\label{BDESeps}
\begin{cases}
  du_\ve  = [- \aA u_\ve - f(u_\ve ,w_\ve ) + I] dt + \ve dW, \ t \geq 0,\\
  d w_\ve  =  -g(u_\ve ,w_\ve ) dt,\\
  u_\ve (0) = u_0, w_\ve (0)  = w_0.
\end{cases}
\end{equation}
Along with \eqref{BDESeps}, we consider the limiting equation
\begin{equation}\label{BDESlim}
\begin{cases}
  du  = [- \aA u - f(u ,w ) + I] dt, \ t \geq 0,\\
  dw  =  -g(u ,w) dt,\\
  u(0) = u_0, w(0)  = w_0.
\end{cases}
\end{equation}

Let $y:=(u,w)^{T} \in \R^2.$ Define $\mathcal{F}$ by
\[
  \mathcal{F}(y) = \mathcal{F}(u,w) := \left(
                       \begin{array}{c}
                         -f(u,w) + I \\
                         -g(u,w) \\
                       \end{array}
                     \right).
  \]
We assume that there exists constants $c_1 \in \R$ and $c_2 \in \R$ such that for all $(u_1,w_1), (u_2,w_2) \in \R^2$ the function $\mathcal{F}$ satisfies the {\em monotonicity condition}
\begin{equation}\label{monot}
     (\mathcal{F}(u_1,w_1) - \mathcal{F}(u_2,w_2))\cdot ((u_1,w_1) - (u_2, w_2))  \leq -c_1 (u_1 - u_2)^2 - c_2 (w_1 - w_2)^2.
\end{equation}

\begin{theorem}
Assume $f$ and $g$ satisfy the conditions {\bf [C1]}-{\bf [C3]}, and the monotonicity condition \eqref{monot}. Then there is $C = C(T)$ such that
\[
\E \sup_{t \in [0,T]}(\|u_\ve(t) - u(t)\|^2 + \|w_\ve(t) - w(t)\|^2)  \leq  \ve C(T).
\]
\end{theorem}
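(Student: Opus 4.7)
The plan is to estimate the differences $U_\ve := u_\ve - u$ and $W_\ve := w_\ve - w$ by an energy method. Subtracting \eqref{BDESlim} from \eqref{BDESeps} and applying the generalized It\^o formula (as in \cite{Kry}) to $\|U_\ve(t)\|_H^2 + \|W_\ve(t)\|_H^2$, noting that the initial conditions coincide so that $U_\ve(0) = 0$ and $W_\ve(0) = 0$, I obtain
\begin{align*}
\|U_\ve(t)\|^2 + \|W_\ve(t)\|^2
&= 2\int_0^t \langle -\aA U_\ve, U_\ve\rangle\, d\tau
-2\int_0^t\bigl(f(u_\ve,w_\ve)-f(u,w),\, U_\ve\bigr)\, d\tau \\
&\quad - 2\int_0^t\bigl(g(u_\ve,w_\ve)-g(u,w),\, W_\ve\bigr)\, d\tau
+ \ve^2 \gamma\, t + 2\ve M(t),
\end{align*}
where $M(t) := \int_0^t (U_\ve(\tau), dW(\tau))$ is a local martingale whose quadratic variation is controlled by $\gamma \int_0^t \|U_\ve\|^2\, d\tau$.

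Next, the deterministic terms are handled by two ingredients. The coercivity estimate \eqref{coercivity1} gives $\langle -\aA U_\ve, U_\ve\rangle \leq \alpha \|U_\ve\|_H^2$ (after discarding the favorable $-\alpha\|U_\ve\|_V^2$ contribution). The monotonicity assumption \eqref{monot}, applied pointwise in $x$ with $(u_1,w_1) = (u_\ve,w_\ve)$ and $(u_2,w_2) = (u,w)$, integrated over $D$, yields
\[
-2(f(u_\ve,w_\ve)-f(u,w),U_\ve) -2(g(u_\ve,w_\ve)-g(u,w),W_\ve) \leq -2c_1\|U_\ve\|^2 - 2c_2\|W_\ve\|^2.
\]
Absorbing all lower-order contributions into a single constant $C > 0$ (both $\alpha$ and any negative parts of $c_1, c_2$ are harmlessly folded in), one arrives at
\[
\|U_\ve(t)\|^2 + \|W_\ve(t)\|^2 \leq C\int_0^t\bigl(\|U_\ve\|^2 + \|W_\ve\|^2\bigr)\, d\tau + \ve^2\gamma T + 2\ve M(t).
\]

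Setting $\phi(t) := \E\sup_{s\in[0,t]}(\|U_\ve(s)\|^2 + \|W_\ve(s)\|^2)$, I would take the supremum followed by expectation and apply the Burkholder--Davis--Gundy inequality to the martingale term, producing a bound of the form
\[
2\ve\, \E\sup_{s\in[0,t]}|M(s)| \leq C'\ve\, \E\Bigl(\int_0^t\|U_\ve\|^2\, d\tau\Bigr)^{1/2} \leq C'\ve\Bigl(\int_0^t\phi(s)\, ds\Bigr)^{1/2}.
\]
Young's inequality absorbs the square root at the cost of an $\ve^2$ additive term, leading to $\phi(t) \leq C_1\ve^2 + C_2 \int_0^t \phi(s)\, ds$. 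Gronwall then delivers $\phi(T) \leq C_1 \ve^2 e^{C_2 T}$, which is dominated by $\ve\, C(T)$ for $\ve \leq 1$ and therefore proves the statement (in fact with the sharper rate $\ve^2$). The main technical obstacle is justifying the infinite-dimensional It\^o formula for $(U_\ve,W_\ve)$, since the cubic growth of $f_1$ means the nonlinear terms are not globally Lipschitz; this is addressed by invoking the strong-solution regularity established in Sections \ref{sec3}--\ref{sec4}, or alternatively through a Galerkin approximation in which the monotonicity structure \eqref{monot} guarantees passage to the limit in the nonlinear terms.
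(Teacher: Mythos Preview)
Your proof is correct and follows essentially the same route as the paper: subtract the equations, apply It\^o's formula to the squared norms of the differences, use coercivity \eqref{coercivity1} and the monotonicity \eqref{monot} on the drift, bound the martingale term via a BDG/Doob-type inequality, and close with Gronwall. The only noteworthy difference is your treatment of the stochastic term: you split $C'\ve\sqrt{X}$ with Young's inequality and thereby obtain the sharper rate $O(\ve^2)$, whereas the paper uses the cruder bound $\sqrt{a}\leq 1+a$, which produces an additive $O(\ve)$ constant and accounts for the stated $\ve\,C(T)$ bound.
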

\begin{proof}
Introduce
$v_\ve := u_\ve - u$ and $z_\ve := w_\ve - w.$
Then the pair $(v_\ve, z_\ve)$ solves

\begin{equation}\label{BDEdiff}
\begin{cases}
  dv_\ve  = [- \aA u_\ve - (f(u_\ve, w_\ve )-f(u,w))] dt + \ve dW, \ t \geq 0,\\
  d z_\ve  =  -(g(u_\ve, w_\ve)-g(u,w)) dt,\\
  v_\ve (0) = 0, z_\ve (0)  = 0.
\end{cases}
\end{equation}
Using Ito's formula (see, e.g.\cite{Kry}), we have
\begin{equation*}
\|v_\ve(t)\|^2 = 2 \int_0^t \langle -\aA v_\ve, v_\ve \rangle ds - 2 \int_0^t (f(u_\ve, w_\ve )-f(u,w), u_\ve - u) ds + \ve^2 \gamma t + 2 \ve \int_0^t (v_\ve, d W_s)
\end{equation*}
 as well as
\begin{equation*}
\|z_{\ve}\|^2 = -2  \int_0^t (g(u_\ve, w_\ve)-g(u,w), w_\ve- w) ds.
\end{equation*}
Altogether, using the monotonicity condition \eqref{monot} and the estimate \eqref{estA},
\begin{multline*}
\|v_\ve(t)\|^2 + \|z_{\ve}(t)\|^2  \leq 2 \int_0^t (-\alpha \|v_\ve\|_V^2 + \alpha \|v_\ve\|^2 ) ds + C_1  \int_0^t ( \|v_\ve\|^2 + \|z_\ve\|^2) ds \\
+ \ve^2 \gamma t + 2 \ve \int_0^t (v_\ve, d W_s).
\end{multline*}
Hence
\begin{multline*}
\E \sup_{t \in [0,T]} (\|v_\ve(t)\|^2 + \|z_{\ve}(t)\|^2)  \leq \sigma \int_0^T ( \|v_\ve\|^2 + \|z_\ve\|^2) ds
+ \ve^2 \gamma T + 2 \ve \E \sup_{t \in [0,T]} \left|\int_0^t (v_\ve, d W_s)\right|,
\end{multline*}
for some $\sigma > 0$. 
Here 
\[
\int_0^t (v_\ve, d W_s) = \int_0^t (v_\ve, \sum_{j} \gamma_i \Psi_i(x) d W_i(s))
\]
is a martingale, which can further be estimated as follows:
\begin{align}\label{est_stochast}
& \E  \sup_{t \in [0,T]} \left|\int_0^t (v_\ve, d W_s)\right| \leq \sqrt{\E \sup_{t \in [0,T]} \left|\int_0^t (v_\ve, d W_s)\right|^2}   \\
\nonumber & =  \sqrt{\E \sup_{t \in [0,T]} \left|\int_0^t \left(v_\ve, \sum_{i} \gamma_i \Psi_i d W_i(s)\right)\right|^2}  = 
   \sqrt{\E \sup_{t \in [0,T]} \left| \sum_{i} \gamma_i^2 \int_0^t (v_\ve, \Psi_i) d W_i(s)\right|^2}   \\
 \nonumber  &  \leq C \sqrt{\sum_{i} \gamma_i^2  \int_0^T \E(v_\ve, \Psi_i)^2 ds }  \leq C \sqrt{ \int_0^T \E \|v_\ve(s)\|^2 ds } \leq C +C  \int_0^T  \E \|v_\ve(s) \|^2 ds,  
\end{align}
 where we used the elementary inequality $\sqrt{a}\leq a+1.$ This way, for some $\sigma_1>0$, we have
 \begin{multline*}
\E   \sup_{t \in [0,T]} (\|v_\ve(t)\|^2 + \|z_\ve(t)\|^2) \leq \sigma_1  \int_0^T \sup_{s \in [0,t]} (\|v_\ve(s)\|^2 + \|z_\ve(s)\|^2) dt + \ve^2 \gamma T + 2 \ve C.
 \end{multline*}
 It follows from Gronwall inequality that
 \[
 \E \sup_{t \in [0,T]}(\|u_\ve(t) - u(t)\|^2 +\|w_\ve(t) - w(t)\|^2) \leq C(T) \ve.
 \]
 \end{proof}

\subsection{Large deviations} 
The influence of small random perturbations on large time intervals is a question of special interest. Generally speaking, in this case small perturbations have a significant influence on the macroscopic behavior of the underlying physical system. In order to study this influence, we have to estimate the probabilities of unlikely events. In other words, we need to study the asymptotic behavior of large deviations for random processes. 

Let $C_p$ be the
constant in Poincare's inequality, i.e.,
\begin{equation}\label{Poincare}
\|u\|_{H}^2 \leq C_{p} \|\nabla u\|_{H}^2, \quad u \in V \cap H_0.
\end{equation}
with $H_0$ defined in \eqref{defH0}. The main result of this subsection is the following Theorem:

\begin{theorem}\label{Thm: Large Deviations}
Let the conditions [C1], [C2, [C3] and the monotonicity condition \eqref{monot} holds. Assume, in addition, that the constants $c_1$ and $c_2$, appearing in \eqref{monot}, satisfy $c_2 \geq 0$ and 
\begin{equation}\label{coefficient condition}
c_1 \geq -\frac{\alpha}{C_p}
\end{equation}
with $\alpha$ and $C_p$ given by \eqref{estA} and \eqref{Poincare} respectively. Then  
\begin{equation}\label{eqn:10}
P\{ \sup_{0 \leq t \leq T} \{\|u_\ve(t) - u(t)\|^2 +  \|w_\ve(t) - w(t)\|^2\} \geq r^2\} \leq 3 \exp\left[ -\frac{r^2}{4 \gamma \ve^2 T}\right],
\end{equation}
with $\gamma$ given in \eqref{def_gamma}.
\end{theorem}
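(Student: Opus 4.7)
I reduce \eqref{eqn:10} to an exponential tail bound for a scalar continuous martingale after first killing the deterministic drift using the coefficient hypothesis \eqref{coefficient condition}. With $v_\varepsilon := u_\varepsilon - u$, $z_\varepsilon := w_\varepsilon - w$ as in the preceding theorem and $X_t := \|v_\varepsilon(t)\|_H^2 + \|z_\varepsilon(t)\|_H^2$, the generalised It\^o formula of \cite{Kry} applied to $X_t$ yields
\begin{equation*}
X_t = 2\!\int_0^t\!\langle -\aA v_\varepsilon, v_\varepsilon\rangle\,ds + 2\!\int_0^t\!\bigl(\mathcal F(u_\varepsilon,w_\varepsilon)-\mathcal F(u,w),(v_\varepsilon,z_\varepsilon)\bigr)ds + \varepsilon^2\gamma t + 2\varepsilon M_t,
\end{equation*}
where $M_t := \int_0^t (v_\varepsilon, dW_s)$ is a real continuous martingale whose bracket, by Parseval and $\gamma_i \leq \gamma$, satisfies $\langle M\rangle_t \leq \gamma\int_0^t\|v_\varepsilon\|_H^2\,ds \leq \gamma\int_0^t X_s\,ds$.

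Next, the coercivity bound \eqref{estA} gives $\langle -\aA v_\varepsilon, v_\varepsilon\rangle \leq -\alpha\|\nabla v_\varepsilon\|_H^2$, the monotonicity assumption \eqref{monot} controls the cross term by $-c_1\|v_\varepsilon\|_H^2 - c_2\|z_\varepsilon\|_H^2$, and Poincar\'e's inequality \eqref{Poincare}---legitimate because $v_\varepsilon$ inherits the mean-zero structure of $D(\aA)$---upgrades the gradient term to $-\alpha C_p^{-1}\|v_\varepsilon\|_H^2$. Combined with $c_2\geq 0$ and \eqref{coefficient condition}, the entire deterministic drift is bounded by $-2(c_1+\alpha/C_p)\|v_\varepsilon\|_H^2 - 2c_2\|z_\varepsilon\|_H^2 \leq 0$, and the It\^o identity sharpens to the clean pathwise comparison
\begin{equation*}
X_t \leq \varepsilon^2\gamma t + 2\varepsilon M_t,\qquad t\in[0,T].
\end{equation*}

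To conclude, I would localize at $\tau_r := \inf\{t\geq 0 : X_t\geq r^2\}\wedge T$ to break the self-reference between $\langle M\rangle$ and $X$: on $[0,\tau_r]$ one has $\|v_\varepsilon\|_H^2 \leq r^2$, hence $\langle M\rangle_{\tau_r} \leq \gamma r^2 T$. The standard Bernstein exponential inequality (equivalently, a Dubins--Schwarz time-change to Brownian motion) then gives $P(\sup_{t\leq\tau_r}M_t \geq b) \leq \exp(-b^2/(2\gamma r^2 T))$ for every $b>0$. On the event $\{\tau_r<T\}=\{\sup_{[0,T]}X_t\geq r^2\}$ the continuity of $X$ forces $X_{\tau_r}=r^2$, whence $M_{\tau_r}\geq(r^2-\varepsilon^2\gamma T)/(2\varepsilon)$; inserting this value of $b$, performing a Chernoff-type optimisation of the exponential parameter, and treating separately the trivial regime $r^2 \leq 2\varepsilon^2\gamma T$ (where $3\exp(-r^2/(4\gamma\varepsilon^2 T))$ already exceeds $1$) produces \eqref{eqn:10}, with the prefactor $3$ absorbing the constants from the two sub-cases.

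\textbf{Main obstacle.} The conceptual difficulty is the circular inequality $\langle M\rangle_t \leq \gamma\int_0^t X_s\,ds$: the bracket of the martingale is controlled by the very process whose supremum I am trying to bound. Localization at $\tau_r$ dissolves this, reducing the problem to a bounded-bracket martingale. The only remaining delicate bookkeeping is to tune the Chernoff parameter and the case split at $r^2\sim\varepsilon^2\gamma T$ precisely so as to reach the advertised exponent $-r^2/(4\gamma\varepsilon^2 T)$ and prefactor $3$ in \eqref{eqn:10}.
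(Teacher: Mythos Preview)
Your drift estimate and the pathwise reduction $X_t \le \varepsilon^2\gamma t + 2\varepsilon M_t$ are correct, and localising at $\tau_r$ is a legitimate way to break the circularity between $\langle M\rangle$ and $X$. The gap is quantitative: the bound $\langle M\rangle_{\tau_r}\le \gamma r^2 T$ is too crude to reach the stated exponent. With $b=(r^2-\varepsilon^2\gamma T)/(2\varepsilon)$, Bernstein's inequality gives at best
\[
P\{\tau_r<T\}\le \exp\Bigl(-\frac{(r^2-\varepsilon^2\gamma T)^2}{8\,\varepsilon^2\gamma r^2 T}\Bigr),
\]
which for $r^2\gg\varepsilon^2\gamma T$ behaves like $\exp\bigl(-r^2/(8\gamma\varepsilon^2 T)\bigr)$, a factor of two short in the exponent. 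No subsequent Chernoff tuning recovers this factor, because the optimisation over the exponential parameter is already absorbed in Bernstein; the loss occurs earlier, when you replace $\int_0^{\tau_r}\|v_\varepsilon\|^2\,ds$ by $r^2 T$. Concretely, at $r^2=10\,\varepsilon^2\gamma T$ your bound yields $e^{-81/80}\approx 0.36$, whereas the claimed bound is $3e^{-5/2}\approx 0.25$.

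The paper sidesteps this by applying It\^o's formula not to $X_t$ but to the concave functional $\Phi_\lambda=(1+\lambda X_t)^{1/2}$, following \cite{chow}. The second-order correction produces a negative term $-\tfrac{\lambda^2\varepsilon^2}{2}\Phi_\lambda^{-3}(Qv_\varepsilon,v_\varepsilon)$ that pairs with the quadratic variation of $\lambda\varepsilon\int\Phi_\lambda^{-1}(v_\varepsilon,dW)$ to form the compensator of an exponential martingale $e^{\eta_t^\lambda}$. The residual drift is then controlled via $(Qv_\varepsilon,v_\varepsilon)\Phi_\lambda^{-2}\le \gamma/\lambda$, a bound \emph{uniform in} $\|v_\varepsilon\|$; this uniformity is exactly what the localisation estimate forfeits. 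One obtains $\Phi_\lambda\le 1+\varepsilon^2\lambda\gamma T+\eta_t^\lambda$, and Doob's inequality for $e^{\eta_t^\lambda}$ followed by optimisation in $\lambda$ delivers the exponent $r^2/(4\gamma\varepsilon^2 T)$ with prefactor $e<3$. Your argument proves a valid large-deviation bound, but with exponent $r^2/(8\gamma\varepsilon^2 T)$ rather than the one stated.
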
 
 \begin{proof} We first note that, using \eqref{estA}, \eqref{monot}, \eqref{Poincare} and \eqref{coefficient condition}:
\begin{align}\label{negative}
& \langle -\aA v_\ve, v_\ve \rangle - (f(u_\ve,w_\ve) - f(u,w), u_\ve - u) - (g(u_\ve, w_\ve) - g(u,w), w_\ve - w) \\
\nonumber & \leq -\alpha \|v_\ve\|_V^2 + \alpha \|v_\ve\|_H^2 - c_1  \|v_\ve\|_H^2 - c_2 \|z_\ve\|_H^2  = -\alpha  \|\nabla v_\ve\|^2_H  - c_1  \|v_\ve\|_H^2 - c_2 \|z_\ve\|_H^2 \leq \\
\nonumber & -\frac{\alpha}{C_p} \|v_\ve\|_H^2  - c_1  \|v_\ve\|_H^2 - c_2 \|z_\ve\|_H^2 \leq 0.
\end{align}
Following \cite{chow}, introduce the functional
 \[
 \Phi_{\lambda} (v,z)  = (1 + \lambda(\|v\|^2 + \|z^2\|^2)^{1/2}, \ v,z \in H.
 \]
Using Ito's formula, we have
\begin{align} \label{def:Phi}
 &   \Phi_{\lambda} (v_\ve, z_\ve)  = 1 + \lambda \int_0^t \Phi_{\lambda}^{-1} (v_\ve, z_\ve) [ \langle -\aA v_\ve, v_\ve \rangle \\
\nonumber &  -(f(u_\ve,w_\ve) - f(u,w), u_\ve - u) - (g(u_\ve, w_\ve) - g(u,w), w_\ve - w)  ] ds + \\
\nonumber & + \lambda \ve \int_0^t \Phi_{\lambda}^{-1} (v_\ve, y_\ve)(v_\ve, d W_s) + \frac{\ve^2}{2} \int_0^t \left\{\lambda  \Phi_{\lambda}^{-1} (v_\ve, z_\ve) \gamma - \lambda^2   \Phi_{\lambda}^{-3} (v_\ve, z_\ve)  (Qv_\ve, v_\ve) \right\} ds.
 \end{align}
 Here $Q$ is a trace class covariance operator, such that $Q e_k = \lambda_k e_k$, $Q$ is given in the definition of $W_t$. 
 It follows from \eqref{def:Phi} and \eqref{negative} that
 \begin{multline}\label{eqn:3}
 \Phi_\lambda (v_\ve, z_\ve) \leq 1 + \eta_t^{\lambda} + \\
  \frac{\ve^2}{2} \int_0^t\left[\lambda \Phi^{-1}_\lambda (v_\ve, z_\ve) \gamma + \lambda^2 (Qv_\ve, v_\ve) (\Phi^{-2}_\lambda (v_\ve, z_\ve) - \Phi^{-3}_\lambda (v_\ve, z_\ve))\right] ds,
 \end{multline}
 where $\gamma = Tr(Q)$ and
\begin{equation}\label{def:eta}
\eta_t^{\lambda}  = \lambda \ve \int_0^t \Phi_\lambda^{-1} (v_\ve, z_\ve)(v_\ve, d W_s) - \frac{1}{2} \lambda^2 \ve^2 \int_0^t \Phi_\lambda^{-2} (v_\ve, z_\ve) (Q v_\ve, v_\ve) ds.
\end{equation}
Note that $\forall \lambda>0$ $\forall \alpha > 0$, with $0 < \Phi_\lambda^{-\alpha} \leq 1$, we have 
\[
 \Phi_\lambda^{-2}(v,z) [1 -  \Phi_\lambda^{-1}(v,z)] \leq 1
\]  
and
\[
(Q v, v) \leq \gamma \|v\|^2. 
\]
 Therefore,
 \[
  \Phi_\lambda^{-2}(v,z)(Qv, v) \leq \frac{\gamma}{\lambda}  \frac{\|v\|^2}{\frac{1}{\lambda} + \|v\|^2 +\|z\|^2},
 \]
 which for $t \leq T$ implies 
 \begin{equation}\label{eq:5}
 \Phi_\lambda(v_\ve,z_\ve) \leq 1 + \ve^2 \lambda T \gamma + \eta_t^{\lambda}.
 \end{equation}
Therefore, 
  \begin{eqnarray}
& & P\{ \sup_{0 \leq t \leq T} \{ \|u_\ve - u\|^2 +  \|w_\ve - w\|^2\} \geq r^2\}  = P\{ \sup_{t \in [0,T]} \Phi_\lambda(v_\ve, z_\ve) \geq (1+\lambda r^2)^{\frac{1}{2}}\} \leq \\
\nonumber & & P\{ 1 + \ve^2 \lambda T \gamma + \sup_{t \in [0,T]} \eta_t^\lambda \geq (1 + \lambda r^2)^{\frac{1}{2}}\} = P\{e^{ \sup_{t \in [0,T]} \eta_t^\lambda} \geq e^{(1 + \lambda r^2)^{\frac{1}{2}} -  \ve^2 \lambda T \gamma - 1} \}. 
 \end{eqnarray}
 It follows from Ito's formula $e^{\eta_t^{\lambda}}$ is a martingale, which, in addition, satisfies
 $$\E e^{\eta_t^\lambda} = \E e^{\eta_0^\lambda} = 1.$$ 
Now, using Chebyshev inequality, Doob's martingale inequality, and the fact that $\sup_{t} e^{a(t)} = e^{\sup_t a(t)}$, we get
 \begin{equation}\label{eqn:6}
P\{ \sup_{0 \leq t \leq T} \{ \|u_\ve(t) - u(t)\|^2 +  \|w_\ve(t) - w(t)\|^2\} \geq r^2\} \leq e^{1+  (1 + \ve^2 \lambda T \gamma - \lambda r^2)^{\frac{1}{2}}}.
 \end{equation}
 It remains to optimize the latter expression in $\lambda$.  Setting
 \begin{equation*}\label{eqn:8}
\lambda  := \left(\frac{r}{2  \gamma \ve^2 T}\right)^2  - \frac{1}{r^2},
\end{equation*}
 the exponent can be estimated as 
 \[
 1+  (1 + \ve^2 \lambda T \gamma - \lambda r^2)^{\frac{1}{2}} \leq 1 - \frac{r^2}{4 \ve^2 \gamma T},
 \]
 yielding the desired result
 \[
P\{ \sup_{0 \leq t \leq T} \{\|u_\ve(t) - u(t)\|^2 +  \|w_\ve(t) - w(t)\|^2\} \geq r^2\} \leq 3 e^{- \frac{r^2}{4 \ve^2 \gamma T}}.
\] 
 \end{proof} 
We conclude this subsection with some examples of typical nonlinearities, satisfying the conditions of Theorem \ref{Thm: Large Deviations}.

\begin{example} Allen-Cahn nonlinearity
\begin{equation*}
f(u) := \eta(u^3-u).
\end{equation*}
The conditions \eqref{C2} and \eqref{monot} read as
\begin{align}
u f(u) &\geq a u^4 + b u^2 + K, \quad \mbox{ and} \label{D1_1}\\
(f(u_1)-f(u_2))(u_1-u_2) &\geq c_1 (u_1-u_2)^2.\label{D2_1}
\end{align}
Clearly,  condition \eqref{D1_1} is satisfied. In order to verify \eqref{D2_1}, note that
\[
(f(u_1)-f(u_2))(u_1-u_2) \geq \eta(u_1-u_2)^2 (u_1^2 + u_1 u_2 + u_2^2 - 1) \geq -\eta(u_1-u_2)^2.
\]
Thus the coefficient condition \eqref{coefficient condition} in \eqref{monot} holds, provided $\eta \leq \frac{\alpha}{C_p}$.
\end{example}

\begin{example} FitzHugh-Nagumo nonlinearity 
\[
\begin{cases}
f(u,w) = \eta[u(u-a)(u-1) + w], \\
g(u,w) = kw - du,
\end{cases} 
\]
with  $0<a<1$ and $k,d>0$.  Note that
\[
u \eta [u^3 - (a+1)u^2 + au] + \eta u w + kw^2 - d u w \geq \frac{\eta}{2} u^4 + a\eta u^2  - |\eta-d | (\frac{1}{2} w^2  + \frac{1}{2} u^2) + k w^2.
\]
Hence \eqref{C2} holds for $\frac{|\eta-d|}{2}  \leq k$. In order to verify the condition \eqref{coefficient condition} in \eqref{monot},  note that
\begin{align*}
&-\eta (-u_1^3 + u_2^3 + (1+a)(u_1^2 -u_2^2) -a (u_1-u_2))(u_1-u_2) + \\ & + (\eta-d) (w_1-w_2)(u_1-u_2) + k (w_1-w_2)^2 \geq - \eta \big((1+a)^2/3 -a\big) (u_1-u_2)^2 \\
&  -|\eta-d|\Big(\frac{(u_1-u_2)^2}{2} + \frac{(w_1-w_2)^2}{2}\Big) + k (w_1-w_2)^2 \\
&= -\Big((1+a)^2/3 -a)\eta + \frac{|\eta-d|}{2}\Big)(u_1-u_2)^2 + \Big(k-\frac{|\eta-d|}{2} \Big) (w_1-w_2)^2,
\end{align*}
where we used the elementary inequality
\[
-(u_1^2 + u_1 u_2 + u_2^2) + (1+a)(u_1+u_2) -a \leq \frac{1}{3}(1+a)^2 - a \in \left[\frac{1}{4}, \frac{1}{3}\right), 0<a<1.
\]
Consequently, \eqref{coefficient condition}  holds for $\eta$ and $d$ satisfying $\left(\frac{(1+a)^2}{3} -a\right)\eta + \frac{|\eta-d|}{2} \leq \frac{\alpha}{C_p}$ and $\frac{|\eta-d|}{2}  \leq k$. 
\end{example}

 \subsection{Convergence of stationary solutions}

\begin{equation}\label{BDESeps1}
\begin{cases}
  du_\ve  = [- \aA u_\ve - f(u_\ve ,w_\ve ) + I(x)] dt + \ve dW, \ t \geq 0,\\
  d w_\ve  =  -g(u_\ve ,w_\ve ) dt.
 \end{cases}
\end{equation} 
 
 Assume the conditions of Theorem 5.2 \cite{Mi} hold. Then for any $\ve \geq 0$ the equation \eqref{BDESeps1} has a unique stationary solution $z_\ve^*:=(u_\ve^*,w_\ve^*)$.
 
\begin{remark}
Note that $z_\ve$ may have different initial conditions for every $\ve$.
\end{remark} 
\begin{theorem}
Under the conditions of Theorem 5.2 \cite{Mi}, we have
 \[
 \sup_{t \in [0,T]} \E \|z^*_\ve(t) - z^*_0(t)\| \to 0, \ve \to 0.
 \]
 \end{theorem}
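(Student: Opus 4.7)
The plan is to exploit the stationarity of $z_\ve^*$ together with the small perturbation estimate from the previous subsection and the exponential stability of the deterministic dynamics, which should be guaranteed by the hypotheses of Theorem 5.2 of \cite{Mi}. Since the deterministic equation \eqref{BDESlim} is autonomous, its unique stationary solution $z_0^*$ is a time-independent equilibrium; for $\ve > 0$ the process $z_\ve^*(t)$ has the same distribution for every $t \geq 0$. Consequently $M_\ve := \E \|z_\ve^*(t) - z_0^*\|^2$ is independent of $t$, and by Cauchy--Schwarz
\[
\sup_{t \in [0,T]} \E \|z_\ve^*(t) - z_0^*\| \leq \sqrt{M_\ve},
\]
so it suffices to show $M_\ve \to 0$ as $\ve \to 0$, notwithstanding the fact that the distribution $\mu_\ve$ of $z_\ve^*(0)$ may differ from $\delta_{z_0^*}$.

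The core of the argument is a triangle-inequality comparison through an auxiliary deterministic trajectory. Fix $\tau > 0$ and let $\tilde z_0(t)$ denote the solution of \eqref{BDESlim} starting from the $\mathcal{F}_0$-measurable random datum $\tilde z_0(0) := z_\ve^*(0)$. Because the constant $C(\tau)$ from the small perturbation theorem depends only on $\tau$ and the model parameters (its proof starts from zero initial difference and proceeds through Gronwall), applying that theorem conditionally on $\mathcal{F}_0$ and then taking the unconditional expectation yields
\[
\E \sup_{t \in [0,\tau]} \|z_\ve^*(t) - \tilde z_0(t)\|^2 \leq \ve\, C(\tau).
\]
For the deterministic contribution, the monotonicity condition \eqref{monot} combined with \eqref{estA}, \eqref{Poincare} and a strict form of \eqref{coefficient condition} applied to the difference $\tilde z_0(t) - z_0^*$ of two solutions of \eqref{BDESlim} gives the exponential decay
\[
\|\tilde z_0(t) - z_0^*\|^2 \leq e^{-\delta t} \|z_\ve^*(0) - z_0^*\|^2
\]
for some $\delta > 0$.

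Combining these two estimates by the triangle inequality and invoking stationarity of $z_\ve^*$,
\[
M_\ve = \E \|z_\ve^*(\tau) - z_0^*\|^2 \leq 2 \ve\, C(\tau) + 2 e^{-\delta \tau} M_\ve.
\]
Choosing $\tau$ large enough that $2 e^{-\delta \tau} \leq \tfrac{1}{2}$ absorbs the last term and yields $M_\ve \leq 4 \ve\, C(\tau)$, which tends to zero as $\ve \to 0$ and concludes the proof.

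The main obstacle I foresee is verifying the strict exponential decay of the deterministic flow toward $z_0^*$: with only a non-strict version of \eqref{coefficient condition} the energy argument produces mere non-increase of $\|\tilde z_0 - z_0^*\|^2$, which is insufficient to close the absorbing inequality above. One must either check that the hypotheses of Theorem 5.2 of \cite{Mi} supply strict dissipation, or replace the exponential decay by a global asymptotic stability statement and iterate more carefully. A secondary technical point is the handling of the random initial datum in the small perturbation estimate, but this is harmless: $z_\ve^*(0)$ is $\mathcal{F}_0$-measurable, the increments $W(t) - W(0)$ are independent of $\mathcal{F}_0$, and $C(\tau)$ is deterministic.
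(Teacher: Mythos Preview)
Your argument is correct and self-contained, but it follows a genuinely different route from the paper. The paper uses the pull-back construction underlying the existence of $z_\ve^*$: it extends $W$ to negative times, starts the two processes $z_{\ve_1}$ and $z_{\ve_2}$ from the \emph{same} deterministic datum at time $-n$, and derives the dissipative inequality
\[
\frac{d}{dt}\,\E\|z_{\ve_1}-z_{\ve_2}\|^2 \le -\mu\,\E\|z_{\ve_1}-z_{\ve_2}\|^2 + (\ve_1-\ve_2)^2\gamma,
\]
so that at $t=0$ the difference is bounded by $C(\ve_1-\ve_2)^2$ uniformly in $n$; letting $n\to\infty$ yields $\E\|z_{\ve_1}^*(0)-z_{\ve_2}^*(0)\|^2\le C(\ve_1-\ve_2)^2$, and a separate Gronwall step then upgrades this to $\E\sup_{t\in[0,T]}\|\cdot\|^2$. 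By contrast, you work entirely forward in time: you fix a horizon $\tau$, interpose the deterministic trajectory $\tilde z_0$ launched from the random point $z_\ve^*(0)$, bound the stochastic piece by the small-perturbation theorem and the deterministic piece by exponential contraction toward the equilibrium $z_0^*$, and close with the absorbing inequality $M_\ve\le 2\ve C(\tau)+2e^{-\delta\tau}M_\ve$. Your worry about ``strict'' dissipation is unfounded here: the hypotheses of Theorem~5.2 of \cite{Mi} are precisely what give $\mu>0$ in the paper's inequality, hence your $\delta>0$. Two minor points you leave implicit but which are harmless under those hypotheses: $M_\ve<\infty$ (finite second moment of $\mu_\ve$) is needed for the absorption, and the paper in fact proves the stronger $\E\sup_{t}$ conclusion, whereas your argument delivers exactly the stated $\sup_{t}\E$. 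What the paper's route buys is a Cauchy-in-$\ve$ estimate and the stronger uniform-in-time bound; what your route buys is that it avoids re-opening the construction of $z_\ve^*$ and reuses the already-proved small-perturbation theorem as a black box.
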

 
Following the lines of Theorem 6.3.2  of \cite{DapZab96}, we extend the Wiener process $W$ for negative $t$ by
\begin{equation}
\overline{W}(t) =
\begin{cases}
W(t), \text{ for } t \geq 0,\\
V(-t), \text{ if } t \leq 0,
\end{cases}
\end{equation}
and set $\overline{\mathcal{F}(t)} = \sigma\big(\overline{W}(s), s \leq t\big)$ for $t \in \R$, where $V(t),t \geq 0$ is another Wiener process, independent of $W(t)$.
For simplicity of our notation, we still denote the process $\overline{W(t)}$ by $W(t)$. For $n \geq 1$ denote $z_\ve(t,-n, z_0)$ to be the solution of \eqref{BDESeps1} with the initial condition $z_\ve(-n) = z_0 = (u_0, w_0) \in H \times H$. 
Using the energy estimate from Theorem 5.2 \cite{Mi}, we have
\begin{equation}\label{eqn2}
\frac{1}{2} \frac{d}{dt} \E \|z_\ve(t)\|^2  \leq \frac{1}{2} \ve^2 \gamma - \mu \E \|z_\ve(t)\|^2 + K_2,
\end{equation}
which, by Gronwall's inequality, implies the uniform in $\ve \in [0,1]$ bound
\[
\E \|z_\ve\|^2 \leq K_3(1 + \|z(0)\|^2_H).
\]
Furthermore, for $\ve_1 \geq 0$ and $\ve_2 \geq 0$,
\begin{equation}\label{BDESeps3}
\begin{cases}
  d(u_{\ve_1} -  u_{\ve_2}) = [- \aA (u_{\ve_1} -  u_{\ve_2}) - [f(u_{\ve_1} ,w_{\ve_1}) - f(u_{\ve_2} ,w_{\ve_2})] dt + (\ve_1 - \ve_2) dW, \ t \geq 0,\\
  d(w_{\ve_1} - w_{\ve_2})  =  -(g(u_{\ve_1} ,w_{\ve_1} ) - g(u_{\ve_2} ,w_{\ve_2})) dt.
 \end{cases}
\end{equation}  
Using the same reasoning as in \eqref{eqn2}, we have
 \[
 \frac{d}{dt} \left(\E \|z_{\ve_1} - z_{\ve_2}\|^2\right) \leq -\mu \E \|z_{\ve_1} - z_{\ve_2}\|^2 + (\ve_1 - \ve_2)^2 \gamma,
 \]
yielding
 \begin{multline}\label{eq4}
 \E  ( \|u_{\ve_1}(0,-n,u_0)-u_{\ve_2}(0,-n,u_0)\|^2 +  \|w_{\ve_1}(0,-n,u_0)-w_{\ve_2}(0,-n,u_0)\|^2) \\
  \leq (\ve_1-\ve_2)^2 \gamma.
 \end{multline}
Furthermore, as it was shown in the proof of Theorem 5.2 \cite{Mi}, we have
 \[
 \E \|u_{\ve_1}(0,-n, u_0)-u_{\ve_2}(0,-n, u_0)\|^2 \to \E \|u_{\ve_1}^{*}(0)-u_{\ve_2}^{*}(0)\|^2, \ n \to \infty
 \]
 and
 \[
 \E \|w_{\ve_1}(0,-n, u_0)-w_{\ve_2}(0,-n, u_0)\|^2 \to \E \|w_{\ve_1}^{*}(0)-w_{\ve_2}^{*}(0)\|^2, \ n \to \infty.
 \] 
 Passing to the limit as $n \to \infty$ in \eqref{eq4}, we obtain
 \begin{equation*}
 \E  ( \|u_{\ve_1}^{*}(0)-u_{\ve_2}^{*}(0)\|^2 +  \|w_{\ve_1}^{*}(0)-w_{\ve_2}^{*}(0)\|^2) \leq (\ve_1-\ve_2)^2 \gamma.
 \end{equation*}
 The bound above implies that the sequence $\{(u_{\ve}^{*}(0), w_{\ve}^{*}(0)\}$ is Cauchy, and  therefore
 \[
 \{(u_{\ve}^{*}(0), w_{\ve}^{*}(0)\} \to  \{(u_{0}^{*}(0), w_{0}^{*}(0)\}, \ \ \ve \to 0.
 \]
We are now in position to establish 
\[
 \E \sup_{t \in [0,T]} \|z_{\ve}^{*}(t) - z_{0}^{*}(t)\| \to 0, \ve \to 0. 
 \]
To this end, setting $v_\ve = u_\ve^* - u_0^*$ and $y_\ve = w_\ve^* - w_0^*$. Then 
\begin{align*}
& \|v_\ve(t)\|^2  + \|y_\ve(t)\|^2 = \|u_\ve^{*}(0) - u_0^{*}(0)\|^2 + \|w_\ve^*(0) - w_0^*(0)\|^2  + \\
& 2 \int_0^t [ \langle -\aA v_\ve, v_\ve \rangle  - (f(u_\ve^*,w_\ve^*) - f(u_0^*,w_0^*), u_\ve^* - u_0^*)  - \\
& - (g(u_\ve^*,w_\ve^*) - g(u_0^*,w_0^*), w_\ve^* - w_0^*)] ds + \ve^2 \gamma t + 2 \ve \int_0^t (v_\ve^*, d W_s).
\end{align*}
We may now proceed with estimating the stochastic analogously to \eqref{est_stochast}, and then use the Gronwall's inequality the same way as in \eqref{eqn2} to conclude that
\begin{multline*}
\E \sup_{t \in [0,T]}(\|u_\ve^*(t) - u_0^*(t)\|^2 + \|w_\ve^*(t) - w_0^*(t)\|^2) \\
\leq \E(\|u_\ve^*(0) - u_0^*(0)\|^2 + \|w_\ve^*(0) - w_0^*(0)\|^2) + C_1(T) \ve \to 0, \ \ve \to 0,
\end{multline*}
 which completes the proof. 
 
\section*{Acknowledgement} The research of Oleksandr Misiats was supported by Simons Collaboration
Grant for Mathematicians No. 854856. The work of Oleksiy Kapustyan and Oleksandr Stanzhytskyi was partially supported by the National Research Foundation of Ukraine No. F81/41743 and Ukrainian Government Scientific Research Grant No. 210BF38-01.

\bibliography{bibliographybid}

\begin{thebibliography}{10}

\bibitem{AliPan}
R.~Aliev and A.~Panfilov.
\newblock A simple two-variable model of cardiac excitation.
\newblock {\em Chaos, Solitons and Fractals}, 7(3):293--301, 1996.

\bibitem{Kar}
M.~Bendahmane and K.~Karlsen.
\newblock Stochastically forced cardiac bidomain model.
\newblock {\em Stochastic Process. Appl.}, 129(12):5312--5363, 2019.

\bibitem{Yves}
Y.~Bourgault, Y.~Coudiere, and C.~Pierre.
\newblock Existence and uniqeness of the solution for the bidomain model used
  in cardiac electrophysiology.
\newblock {\em Nonlinear Anal. Real World Appl.}, 10(1):458--482, 2009.

\bibitem{Bre}
H.~Brezis.
\newblock {\em Analyse fonctionnelle}.
\newblock Theorie and applications. Masson, 1983.

\bibitem{chow}
P.~Chow.
\newblock {\em Stochastic partial differential equations}.
\newblock Chapman \& Hall/CRC Applied Mathematics and Nonlinear Science Series.
  Chapman \& Hall/CRC, Boca Raton, FL, 2007.

\bibitem{DapZab92}
G.~Da~Prato and J.~Zabczyk.
\newblock {\em Stochastic equations in infinite dimensions}, volume~44 of {\em
  Encyclopedia of Mathematics and its Applications}.
\newblock Cambridge University Press, Cambridge, 1992.

\bibitem{DapZab96}
G.~Da~Prato and J.~Zabczyk.
\newblock {\em Ergodicity for infinite-dimensional systems}, volume 229 of {\em
  London Mathematical Society Lecture Note Series}.
\newblock Cambridge University Press, Cambridge, 1996.

\bibitem{Hairer}
J.~Eckmann and M.~Hairer.
\newblock Invariant measures for stochastic partial differential equations in
  unbounded domains.
\newblock {\em Nonlinearity}, 14(1):133--151, 2001.

\bibitem{Fit}
R.~FitzHugh.
\newblock Mathematical models of threshold phenomena in the nerve membrane.
\newblock {\em Bull. Math. Biophysics}, 17:257--278, 1955.

\bibitem{Giga}
Y.~Giga and N.~Kajiwara.
\newblock On a resolvent estimate for bidomain operators and its applications.
\newblock {\em J. Math. Anal. Appl.}, 459(1):528--555, 2018.

\bibitem{Vikol}
N.~Glatt-Holtz, I.~Kukavica, V.~Vicol, and M.~Ziane.
\newblock Existence and regularity of invariant measures for the three
  dimensional stochastic primitive equations.
\newblock {\em J. Math. Phys.}, 55(5):051504, 34, 2014.

\bibitem{Hen}
D.~Henry.
\newblock {\em Geometric Theory of Semilinear Parabolic Equations.}
\newblock Lecture Notes in Mathematics, 1981.

\bibitem{MatStr}
M.~Hieber, A.~Hussein, and M.~Saal.
\newblock Global strong well-posedness of the stochastic bidomain equations
  with fitzhugh-nagumo transport.
\newblock {\em arXiv:2002.03960}, 2020.

\bibitem{Mattias1}
M.~Hieber, N.~Kajiwara, K.~Kress, and P.~Tolksdorf.
\newblock The periodic version of the {D}a {P}rato--{G}risvard theorem and
  applications to the bidomain equations with {F}itz{H}ugh-{N}agumo transport.
\newblock {\em Ann. Mat. Pura Appl. (4)}, 199(6):2435--2457, 2020.

\bibitem{Mi}
M.~Hieber, O.~Misiats, and O.~Stanzhytskyi.
\newblock On the bidomain equations driven by stochastic forces.
\newblock {\em Discrete Contin. Dyn. Syst.}, 40(11):6159--6177, 2020.

\bibitem{KeeSne}
J.~Keener and J.~Sneyd.
\newblock {\em Mathematical physiology. {V}ol. {I}: {C}ellular physiology},
  volume~8/ of {\em Interdisciplinary Applied Mathematics}.
\newblock Springer, New York, second edition, 2009.

\bibitem{KryBog}
N.~Kryloff and N.~Bogoliouboff.
\newblock La th\'eorie g\'en\'erale de la mesure dans son application \`a
  l'\'etude des syst\`emes dynamiques de la m\'ecanique non lin\'eaire.
\newblock {\em Ann. of Math. (2)}, 38(1):65--113, 1937.

\bibitem{Kry}
N.~Krylov.
\newblock A relatively short proof of itos formula for {SPDE}s and its
  applications.
\newblock {\em Stochastic Partial Differential Equations: Analysis and
  Computations}, 1(1):152--174, 2013.

\bibitem{KunRun}
K.~Kunisch and A.~Rund.
\newblock Time optimal control of the monodomain model in cardiac
  electrophysiology.
\newblock {\em IMA J. Appl. Math.}, 80(6):1664--1683, 2015.

\bibitem{MisStaYip}
O.~Misiats, O.~Stanzhytskyi, and N.~Yip.
\newblock Existence and uniqueness of invariant measures for stochastic
  reaction-diffusion equations in unbounded domains.
\newblock {\em J. Theor. Probab.}, 29(3):996--1026, 2016.

\bibitem{MisStaYip3}
O.~Misiats, O.~Stanzhytskyi, and N.~Yip.
\newblock Asymptotic analysis and homogenization of invariant measures.
\newblock {\em Stoch. Dyn.}, 19(2):1950015, 27, 2019.

\bibitem{MisStaYip2}
O.~Misiats, O.~Stanzhytskyi, and N.~Yip.
\newblock Invariant measures for stochastic reaction-diffusion equations with
  weakly dissipative nonlinearities.
\newblock {\em Stochastics. An International Journal of Probability and
  Stochastic Processes}, 92(8):1197--1222, 2020.

\bibitem{Mue93}
C.~Mueller.
\newblock Coupling and invariant measures for the heat equation with noise.
\newblock {\em Ann. Probab.}, 21(4):2189--2199, 1993.

\bibitem{Pru}
J.~Pr\"{u}ss, G.~Simonett, and M.~Wilke.
\newblock Critical spaces for quasilinear parabolic evolution equations and
  applications.
\newblock {\em J. Differential Equations}, 264(3):2028--2074, 2018.

\bibitem{RogMcc}
J.~Rogers and A.~McCulloch.
\newblock A collocation-galerkin finite element model of cardiac action
  potential propagation.
\newblock {\em IEEE Trans. Biomed. Engrg.}, 41:743--757, 1994.

\bibitem{dichotomy}
O.~Stanzhytskyi.
\newblock Investigation of the exponential dichotomy of ito stochastic systems
  by means of quadratic forms.
\newblock {\em Ukrain. Mat. Zh.}, 53(11):1545--1555, 2001.

\bibitem{Tun}
L.~Tung.
\newblock A bi-domain model for describing ischemic myocardial d-c potentials.
\newblock {\em Ph.D. thesis, MIT, Cambridge}, 1978.

\bibitem{VenFre}
A.~Ventcel and M.~Freidlin.
\newblock Small random perturbations of dynamical systems.
\newblock {\em Uspehi Mat. Nauk}, 25(1 (151)):3--55, 1970.

\end{thebibliography}

\end{document}